\documentclass[12pt]{amsart}

\usepackage{graphicx}
\usepackage{amsmath}
\usepackage{amsthm}
\usepackage{amscd}
\usepackage{amssymb}
\usepackage{verbatim}
\usepackage[mathscr]{eucal}
\usepackage[all]{xy}
\usepackage{setspace}
\usepackage{psfrag}
\usepackage{url}
\usepackage{pinlabel}

\usepackage[colorlinks=true,linkcolor=blue,citecolor=blue,urlcolor=blue]{hyperref}

\title[An absolute $\Z/2$ grading on bordered Heegaard Floer homology]{An absolute $\Z/2$ grading on bordered Heegaard Floer homology}
\author{Ina Petkova}
\address {Department of Mathematics, Dartmouth College\\ Hanover, NH 03755}
\email {ina.petkova@dartmouth.edu}

\theoremstyle{plain}
\newtheorem{theorem}{Theorem}
\newtheorem{proposition}[theorem]{Proposition}
\newtheorem{lemma}[theorem]{Lemma}
\newtheorem{corollary}[theorem]{Corollary}
\newtheorem{claim}[theorem]{Claim}

\newtheorem{defn}[theorem]{Definition}

\makeatletter
\renewenvironment{proof}[1][\proofname]{\par
  \pushQED{\qed}%
  \normalfont \topsep6\p@\@plus6\p@\relax
  \trivlist
  \item[\hskip\labelsep
        \bfseries
    #1\@addpunct{.}]\ignorespaces
}{%
  \popQED\endtrivlist\@endpefalse
}
\makeatother

\def\remark{{\bf {\bigskip}{\noindent}Remark. }}

\def\ackn{{\bf {\bigskip}{\noindent}Acknowledgments. }}

\def\title{\em}

\def\bar{\overline}

 \usepackage[pdftex,lmargin=1.25in,rmargin=1.25in,tmargin=1in,bmargin=1in]{geometry}




\def\example{{\bf {\bigskip}{\noindent}Example: }}

\def\remark{{\bf {\bigskip}{\noindent}Remark. }}

\def\ackn{{\bf {\bigskip}{\noindent}Acknowledgments. }}
\def\bar{\overline}


\newcommand{\bdy}{\partial}


\newcommand{\az}{\mathcal{A}(\zz)}
\newcommand{\cala}{\mathcal{A}}
\newcommand{\ainf}{\mathcal{A}_\infty}

\newcommand{\sss}{{\bf s}}

\newcommand{\xx}{{\bf x}}
\newcommand{\yy}{{\bf y}}

\newcommand{\ta}{{\mathbb T_{\alpha}}}
\newcommand{\tb}{{\mathbb T_{\beta}}}


\DeclareMathOperator{\Int}{Int}

\DeclareMathOperator{\inv}{inv}
\DeclareMathOperator{\ind}{ind}
\DeclareMathOperator{\id}{id}




\newcommand{\HH}{\mathcal{H}}

\newcommand{\zz}{\mathcal Z}


\newcommand{\Z}{\mathbb Z}
\newcommand{\R}{\mathbb R}

\newcommand{\Q}{\mathbb Q}
\newcommand{\F}{\mathbb F}


\newcommand{\aaa}{\mathbf{a}}
\newcommand{\bbb}{\mathbf{b}}
\newcommand{\ccc}{\mathbf{c}}
\newcommand{\ddd}{\mathbf{d}}
\newcommand{\xxx}{\mathbf{x}}
\newcommand{\yyy}{\mathbf{y}}
\newcommand{\zzz}{\mathbf{z}}
\newcommand{\www}{\mathbf{w}}


\newcommand{\balpha}{\boldsymbol\alpha}
\newcommand{\bbeta}{\boldsymbol\beta}

\newcommand{\cf}{\mathit{CF}}
\newcommand{\hf}{\mathit{HF}}
\newcommand{\cfd}{\mathit{CFD}}
\newcommand{\cfa}{\mathit{CFA}}

\newcommand{\sfh}{\mathit{SFH}}
\newcommand{\cfhat}{\widehat{\cf}}
\newcommand{\hfhat}{\widehat{\hf}}
\newcommand{\cfdhat}{\widehat{\cfd}}
\newcommand{\cfahat}{\widehat{\cfa}}

\newcommand{\hha}{\HH^{\alpha}}
\newcommand{\hhb}{\HH^{\beta}}
\newcommand{\hz}{\HH_{\zz}}

\newcommand{\brho}{\boldsymbol\rho}

\begin{document}

\maketitle

\begin{abstract}
Bordered Heegaard Floer homology is an invariant for $3$-manifolds, which associates to a surface $F$ an algebra  $\az$, and to a $3$-manifold $Y$ with boundary, together with an orientation-preserving diffeomorphism $\phi: F\to \bdy Y$, a module over $\az$.
In \cite{dec} we defined relative $\Z/2$ differential gradings on the algebra $\az$ and the modules over it. In this paper, we turn the relative grading into an absolute one,  and show that the resulting $\Z/2$-graded module is an invariant of the bordered 3--manifold. 
\end{abstract}

\section{Introduction}

Heegaard Floer homology is an invariant for closed, oriented $3$-manifolds, defined by Ozsv\'ath and Szab\'o \cite{osz14}. The simplest version  takes the form of a chain complex $\cfhat$ over the integers, which splits into a direct sum by the $\mathrm{spin}^c$ structures of the $3$-manifold. Bordered Heegaard Floer homology is an extension of Heegaard Floer homology to manifolds with boundary \cite{bfh2}, which has  provided powerful gluing techniques for computing the original Heegaard
Floer invariants of closed manifolds and knots. While the Floer invariants for closed manifolds enjoy a nice absolute differential grading, for example by $\Z/2$ or by $\Q$ \cite{osz14, osz6}, there is no similar grading for bordered Heegaard Floer homology.

The idea of the bordered Floer construction is as follows. To a parametrized surface one associates a differential algebra $\az$, where $\zz$ is a way to represent the surface, and to a manifold with parametrized boundary represented by $\zz$ a left type $D$ structure $\cfdhat$ over $\az$, or a right $\ainf$-module $\cfahat$ over $\az$.  Both structures are invariants of the manifold up to homotopy equivalence, and their  tensor product is an invariant of the closed manifold obtained by gluing two bordered manifolds along their boundary, and recovers $\cfhat$.

The bordered Heegaard Floer modules above also split according to the $\mathrm{spin}^c$ structures of the $3$-manifold. The fathers of the bordered theory define a grading on the modules by sets with an action by a non-commutative group, one such set for each $\mathrm{spin}^c$ structure. 
 It is natural to desire a group grading which is defined simultaneously for all $\mathrm{spin}^c$ structures. In this direction, Gripp and Huang recently provided a nice construction of an absolute grading by the set of homotopy classes of non-vanishing vector fields on the bordered $3$-manifold \cite{gh}. The goal of this paper is to introduce an absolute $\Z/2$ grading which is easily computable from a Heegaard diagram.

In \cite{dec} we defined a  $\Z/2$ grading on $\az$, as well as a relative $\Z/2$ grading on the modules over $\az$ which agrees with the relative  Maslov grading mod $2$ after gluing.  The grading comes from an ordering and orientation of the $\alpha$- and $\beta$-curves on a Heegaard diagram. There is more than one choice of how to do that, and a priori one only gets a relative grading. In this paper, we introduce a canonical choice and obtain an absolute $\Z/2$ grading. 

\begin{theorem}\label{grthm}
Given a bordered Heegaard diagram $\HH$, there is an absolute $\Z/2$ grading on $\cfdhat(\HH)$. More precisely, if $\mathfrak S(\HH)$ is the set of generators of $\cfdhat(\HH)$ coming from the Heegaard diagram, then  there is a function $m: \mathfrak S(\HH)\to \Z/2$ such that if $\xx\in \mathfrak S(\HH)$ and $\aaa = a(\brho)\in \cala(-\bdy \HH)$, then $\xx$ and $\aaa$ are homogeneous with respect to the grading, and 
\begin{enumerate}
\item $m(\aaa \xx) =  m(\aaa) + m(\xx)$, and 
\item  $m(\bdy \xx) = m(\xx)-1$.
\end{enumerate} 
\end{theorem}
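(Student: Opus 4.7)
The plan is to use the relative $\Z/2$ grading construction of \cite{dec} with a canonical choice of auxiliary data on the $\alpha$-arcs. Recall that in \cite{dec}, an ordering and orientation $\mathcal{O}$ of the $\alpha$- and $\beta$-curves of $\HH$ determines a $\Z/2$-valued function $m_\mathcal{O} \colon \mathfrak S(\HH) \to \Z/2$ satisfying the analog of (2), namely $m_\mathcal{O}(\bdy \xx) = m_\mathcal{O}(\xx) - 1$. The key point is that property (1) is not automatic for an arbitrary $\mathcal{O}$: it requires the orientations of the $\alpha$-arcs to be the same ones used to define the grading on $\az$, since the action $\aaa \cdot \xx$ involves moving intersection points along $\bdy \HH$ in a way controlled by the oriented $\alpha$-arcs.

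The canonical choice is the one supplied by the pointed matched circle $\zz$ itself. The endpoints of each $\alpha$-arc form a matched pair on $\zz$, and the linear order on $\zz$ starting from the basepoint induces both an ordering of the $\alpha$-arcs and an orientation on each one; this is the same data used in \cite{dec} to define the $\Z/2$ grading on $\az$. For the $\alpha$-circles and $\beta$-circles no such intrinsic data is available, but any choice of ordering and orientation will do: reversing the orientation of a single circle changes the sign of the one intersection of that circle appearing in each generator, and transposing two adjacent curves in the ordering flips the sign of the associated permutation $\sigma_\xx$ for every $\xx$, so these moves each shift $m_\mathcal{O}$ by a constant on all of $\mathfrak S(\HH)$. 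I define $m := m_\mathcal{O}$ for any $\mathcal{O}$ extending the canonical data on the $\alpha$-arcs; different extensions differ by an overall constant.

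With this setup, property (2) is immediate from the relative statement in \cite{dec}, since it is preserved under adding a constant. The main task is to verify property (1). For a basic generator $\aaa = a(\brho) \in \cala(-\bdy \HH)$, the action of $\aaa$ on $\xx$ replaces intersection points on certain $\alpha$-arcs with new ones obtained by following a prescribed collection of Reeb chords along $\bdy \HH$. Using that the canonical orientations of the $\alpha$-arcs agree with the ones underlying the definition of $m$ on $\az$, I would show that the resulting change $m(\aaa \xx) - m(\xx)$ is precisely the combinatorial count that defines $m(\aaa)$ in \cite{dec}. The main obstacle is this compatibility check: unwinding the relative grading formula to pin down the contribution of each Reeb chord, and confirming that the canonical arc orientations make the algebra and module gradings line up without an extra error term.
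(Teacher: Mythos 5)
Your proposal reproduces the \emph{relative} grading of \cite{dec} but never actually pins down an absolute one, and that pinning-down is the whole point of this theorem. You correctly observe that the pointed matched circle canonically orders and orients the $\alpha$-arcs, that the remaining choices (ordering/orientation of the $\alpha$- and $\beta$-circles) only change $m_{\mathcal O}$ by an overall constant, and that properties (1) and (2) survive such a shift --- all of which is in \cite{dec} and Section \ref{rel}. But then you write ``I define $m := m_{\mathcal O}$ for any $\mathcal O$ extending the canonical data,'' which leaves the overall constant undetermined: you have a well-defined grading only up to a global shift by $1$, i.e.\ exactly the relative grading you started from. The paper's proof has to supply a mechanism for choosing the constant canonically. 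It does this by constructing, from $\zz$ alone, a preferred bordered handlebody diagram $\hz$ whose $\beta$-curves realize the $<^{\ast}_{\mathrm{lex}}$-minimal embeddable Lagrangian basis $l_{\zz}$ of $H_1(F(\zz);\Z)$, gluing $\hz$ to $\HH$, and demanding that the additive grading on $\hz\cup\HH$ match the canonical absolute $\Z/2$ grading on closed diagrams coming from the Poincar\'e-duality homology orientation (Section \ref{closedgr}). It then still must check that this is independent of how the $\beta$-curves of $\hz$ are embedded within their homology classes. None of this appears in your proposal.

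A secondary point: you identify the verification of property (1) --- compatibility of the module grading with the algebra grading under the action of $a(\brho)$ --- as ``the main task.'' In the paper that compatibility is already established (it is \cite[Lemma 19 and Proposition 20]{dec}, restated in Section \ref{rel}), and the formula for $s(\xx)$ involves an extra factor $\mathrm{sign}(\sigma_{o(\xx)})$ coming from the occupied idempotent, which your sketch omits. So the effort in your plan is concentrated on the part that is already done, while the genuinely new content of Theorem \ref{grthm} --- the canonical choice via $l_{\zz}$ and $\hz$, and its well-definedness --- is missing.
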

The grading $m$  on $\cala(-\bdy \HH)$ above is the one we construct in \cite{dec}.
The resulting graded module is an invariant of the bordered 3--manifold. 

\begin{theorem}\label{invariance}
Let  $\HH$ be a bordered Heegaard diagram for a bordered $3$-manifold $Y$. Up to graded homotopy equivalence, the $\Z/2$-graded differential module $\cfdhat(\HH)$ is independent of the choice of sufficiently generic admissible almost complex structure, and provincially admissible Heegaard diagram for $Y$.
\end{theorem}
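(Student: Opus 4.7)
The plan is to follow the standard template for proving invariance of bordered Heegaard Floer homology from Lipshitz--Ozsv\'ath--Thurston \cite{bfh2}, upgrading each step to the $\Z/2$-graded setting using Theorem \ref{grthm}. Recall that any two provincially admissible bordered Heegaard diagrams for $Y$ (with a fixed sufficiently generic almost complex structure) can be connected by a finite sequence of the following moves: change of almost complex structure; isotopy of $\alpha$-arcs, $\alpha$-circles, and $\beta$-circles; handleslides among $\alpha$- or $\beta$-curves; and index-one/two stabilizations. For each such move I will produce a chain homotopy equivalence and check that it is graded with respect to the absolute $\Z/2$ grading $m$.

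The first two cases are comparatively soft. For a change of almost complex structure the set of generators $\ghs$ is literally unchanged, so $m$ is the same on both sides, and the continuation maps count index-zero holomorphic curves. By the compatibility of the relative grading with the Maslov index established in \cite{dec}, those maps automatically have grading degree zero. Isotopy invariance works similarly: one chooses a generic one-parameter family in which the combinatorial data determining the canonical choice of $m$ (orientations/orderings of the $\alpha$- and $\beta$-curves) can be transported continuously, and the induced continuation map is again a grading-preserving index-zero curve count.

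The main obstacle is handleslide invariance. When $\alpha_i$ is slid over $\alpha_j$ (and analogously on the $\beta$-side) the chain equivalence $\cfdhat(\HH) \to \cfdhat(\HH')$ is given by pairing with a distinguished top generator $\Theta$ of the Floer homology of a triple $(\Sigma, \balpha, \balpha', \bbeta)$ and counting holomorphic triangles. I would first pin down the absolute grading $m(\Theta)$ from the canonical prescription of Theorem \ref{grthm} applied to the auxiliary handleslide diagram, then verify that the grading discrepancy between the canonical choices on $\HH$ and $\HH'$ is exactly absorbed into $m(\Theta)$. This is the point at which the canonicity in Theorem \ref{grthm} is used in an essential way: the orientation/ordering data was set up in \cite{dec} so that the combinatorial change-of-basis under a handleslide matches the Maslov shift of the triangle. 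Once this is in hand, the standard fact that the counted triangles have index zero upgrades to the statement that the triangle map has $\Z/2$-degree zero, and one concludes by the usual holomorphic quadrilateral argument that the resulting composition is graded-homotopic to the identity.

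Finally, stabilization is largely formal: a stabilization is realized by connect-summing with a genus-one bordered diagram $\HH_0$ for $S^3$ supported in a small ball away from the boundary, the set of generators becomes $\ghs \times \{x_0\}$, and the canonical grading splits as $m \otimes m_0$. The standard chain equivalence $\cfdhat(\HH) \to \cfdhat(\HH \# \HH_0)$ is then graded by inspection, modulo a normalization check that the canonical $m_0$ on $\HH_0$ vanishes on $x_0$. Combining these four cases gives graded invariance, and hence the theorem.
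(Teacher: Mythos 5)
Your overall strategy---decompose into Heegaard moves and check that each continuation/triangle/stabilization map is graded because it counts index-zero curves---is exactly the paper's strategy, so the skeleton is right. But two points that you treat as automatic are precisely where the paper has to do work, and as written your argument has gaps there.

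First, you assert that index-zero curve counts ``automatically have grading degree zero'' by ``the compatibility of the relative grading with the Maslov index established in \cite{dec}.'' In the bordered setting the continuation and triangle maps count curves with Reeb chord asymptotics and output algebra elements $a(-\vec{\rho})$, so the statement you actually need is: for a compatible pair $(B,\vec{\rho})$ of embedded index $n$, one has $m(\xx) = m(a(-\vec{\rho})) + m(\yy) + n$. This is not in \cite{dec}; it is Proposition \ref{indgr} of the present paper, and its proof is nontrivial---it goes through an index-additivity computation under the gluing $B_A \natural B_D$ (Lemma \ref{ind}, which requires tracking how $\iota(\vec{\rho})$ and $\iota(\brho)$ relate via the orientation reversal of the boundary) together with the computation that the multiplication domains in $\mathrm{AZ}(\zz)$ have index $1$ (Lemma \ref{indaz}). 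Without this, even the ``soft'' change-of-complex-structure step is incomplete, since those continuation maps also emit algebra elements.

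Second, for handleslides you say the needed compatibility ``was set up in \cite{dec} so that the combinatorial change-of-basis under a handleslide matches the Maslov shift of the triangle.'' That is the claim to be proven, not a citable fact. The paper proves it by orienting the handleslid curve $\alpha_i^H$ by the standard convention, closing off with $\HH_{\zz}$, and observing that the handleslide induces the change of basis $[\widetilde\alpha_i] \mapsto [\widetilde\alpha_i \pm \alpha_j^c]$ on $C_1 \oplus C_2$ of the closed manifold, which is orientation-preserving; hence the canonical homology orientation, and with it the absolute grading, is unchanged. Relatedly, the paper does not need to compute $m(\Theta)$ and show a discrepancy is ``absorbed'' into it: once the canonical orientations on $\HH$ and $\HH'$ are shown to agree after closing off, the nearest-generator map $\xx \mapsto \xx'$ visibly preserves permutations and local intersection signs, and Proposition \ref{indgr} applied to the index-zero triangles finishes the argument. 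Your $\Theta$-bookkeeping could be made to work, but you would still have to supply the orientation-compatibility argument above, which is the actual content of the step.
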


Recall that the Euler characteristic of $\cfdhat(Y)$ spans (over $\Z$) the vector space $\mathrm{Span}[\cfdhat(Y)] = |H_1(Y, \bdy Y)| \Lambda^k \ker(i_{\ast}:H_1(F(\zz))\to H_1(Y))$  \cite[Theorem 4]{dec}.
 We remark that Theorem \ref{invariance} eliminates the sign indeterminacy, implying that not only $\mathrm{Span}[\cfdhat(Y)]$, but $[\cfdhat(Y)] $ itself is a topological invariant of $Y$.

\begin{corollary}
Let $\HH$ be a provincially admissible bordered Heegaard diagram for a bordered 3-manifold $(Y, \zz, \phi)$. Then $[\cfdhat(\HH)]$ is an invariant of the bordered manifold.
\end{corollary}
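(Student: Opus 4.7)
The plan is to deduce the corollary as a direct consequence of Theorem \ref{invariance}, combined with the standard fact that the graded Euler characteristic (in whatever refined Grothendieck-group sense is used in \cite[Theorem 4]{dec}) is a graded homotopy invariant of a $\Z/2$-graded differential module.

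First, I would recall that for any $\Z/2$-graded chain complex $(C,\bdy)$ over a base ring, the class $[C]$ in the appropriate Grothendieck group depends only on the graded homotopy equivalence class of $C$: a graded chain homotopy equivalence induces a graded isomorphism on homology, and passing to Euler characteristic then kills nothing further. In the bordered setting the relevant Grothendieck group is the one identified in \cite[Theorem 4]{dec}, namely $|H_1(Y,\bdy Y)|\,\Lambda^k\ker\bigl(i_{\ast}\co H_1(F(\zz))\to H_1(Y)\bigr)$, and $[\cfdhat(\HH)]$ is the element of this group determined by the absolute $\Z/2$ grading of Theorem \ref{grthm}.

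Next, I would apply Theorem \ref{invariance}: any two provincially admissible bordered Heegaard diagrams $\HH$ and $\HH'$ representing the same bordered $3$-manifold $(Y,\zz,\phi)$, together with any two choices of sufficiently generic admissible almost complex structure, produce $\Z/2$-graded differential modules $\cfdhat(\HH)$ and $\cfdhat(\HH')$ that are graded homotopy equivalent. Combining this with the preceding paragraph, $[\cfdhat(\HH)] = [\cfdhat(\HH')]$, so the class depends only on $(Y,\zz,\phi)$.

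The only subtlety worth highlighting (and the one the corollary is really about) is the sign indeterminacy issue already flagged after Theorem \ref{invariance}: with only a relative $\Z/2$ grading one could globally swap the even and odd parts, which reverses the sign of the Euler characteristic, so a priori only $\mathrm{Span}[\cfdhat(Y)]$ was known to be a topological invariant. The absolute grading from Theorem \ref{grthm} fixes which generators lie in degree $0$ versus degree $1$, and Theorem \ref{invariance} states that this absolute assignment is preserved under the homotopy equivalences coming from changes of diagram and complex structure. This pins down the sign and promotes $\mathrm{Span}[\cfdhat(Y)]$ to the honest invariant $[\cfdhat(Y)]$; there is no further obstacle beyond citing the two theorems.
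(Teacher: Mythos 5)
Your proposal is correct and matches the paper's reasoning: the corollary is presented there as an immediate consequence of Theorem \ref{invariance} together with the decategorification result of \cite[Theorem 4]{dec}, with the remark preceding it making exactly your point that the absolute grading eliminates the sign indeterminacy that previously left only $\mathrm{Span}[\cfdhat(Y)]$ well defined. Nothing further is needed.
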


One may use a similar approach to define an absolute grading on $\cfahat(\HH)$; we do not do this here, but simply work out in detail the case of $\cfdhat(\HH)$. One may hope that 
after gluing our grading would recover the absolute $\Z/2$ grading on $\cfhat$ defined in \cite{osz14}. However, it was observed by Hanselman that this is not the case; see \cite[Remark 2]{jh-splicing}.

\remark The absolute grading defined in this paper easily generalizes to bimodules, and extends \cite[Remark 1.2]{bimod} to decategorification with $\Z$ coefficients. 
\\

The outline of this  paper is as follows. Section \ref{backgr} provides a brief introduction to bordered Floer homology. Section \ref{closedgr} discusses a grading on the Heegaard Floer homology of closed manifolds, following \cite{dsfh}. Section \ref{rel} extends the definitions and results on the  relative grading on bordered Floer homology from \cite{dec}. Section \ref{abssec} resolves the indeterminacy in the definition of the grading, lifting it to an absolute grading. Section \ref{inv} is the proof of Theorem \ref{invariance}.

\ackn I am grateful to Robert Lipshitz for many inspiring conversations, and for his valuable comments on earlier drafts of this paper. I am also thankful to Paolo Ghiggini, Jonathan Hanselman, and Eamonn Tweedy for useful discussions, and to the referee to helpful comments and corrections.  
A large part of this work was completed during an informal visit at UQAM in Summer 2013; I thank Steve Boyer and Olivier Collin for their hospitality. 

\section{Background in bordered Floer homology}\label{backgr}
This section is a brief introduction to bordered Floer homology.

\subsection{The algebra $\az$}
 We describe the differential graded algebra $\az$ associated to the parametrized boundary of a $3$-manifold. For further details, see  \cite[Chapter 3]{bfh2}.

\begin{defn}\label{alg}
The \emph{strands algebra} $\cala(n,k)$ is a free $\Z/2$-module generated by partial permutations $a = (S, T, \phi)$, where $S$ and $T$ are $k$-element subsets of the set $[n]:= \{1, \ldots, n\}$ and $\phi:S\to T$ is a non-decreasing bijection. We let $\inv(a) = \inv(\phi)$ be the number of inversions of $\phi$, i.e. the number of pairs $i,j\in S$ with $i<j$ and $\phi(j)<\phi(i)$. Multiplication is given by 
\begin{displaymath}
(S, T, \phi)\cdot(U, V, \psi) = \left\{ \begin{array}{ll}
(S, V, \psi\circ\phi) & \textrm{if $T=U$, $\inv(\phi)+ \inv(\psi) = \inv(\psi\circ\phi)$}\\
0 & \textrm{otherwise.}
\end{array} \right.
\end{displaymath}
See  \cite[Section 3.1.1]{bfh2}.
We can represent a generator $(S, T, \phi)$ by a strands diagram of horizontal and upward-veering strands. See  \cite[Section 3.1.2]{bfh2}.
The differential of $(S, T, \phi)$ is the sum of all possible ways to ``resolve" an inversion of $\phi$ so that $\inv$ goes down by exactly $1$. Resolving an inversion $(i,j)$ means switching $\phi(i)$ and $\phi(j)$, which graphically can be seen as smoothing a crossing in the strands diagram.
\end{defn}

The ring of idempotents $\mathcal I(n,k)\subset \cala(n,k)$ is generated by all elements of the form $I(S) := (S, S, \textrm{id}_S)$ where $S$ is a $k$-element subset of $[n]$. 
 
\begin{defn} A \emph{pointed matched circle} is a quadruple $\zz = (Z, {\bf a}, M, z)$ consisting of an oriented circle $Z$, a collection of $4k$ points ${\bf a} = \{a_1, \ldots, a_{4k}\}$ in $Z$, a \emph{matching} of ${\bf a}$, i.e., a $2$-to-$1$ function $M:{\bf a}\to [2k]$, and a basepoint $z\in Z\setminus {\bf a}$. We require that performing oriented surgery along the $2k$ $0$-spheres $M^{-1}(i)$ yields a single circle. 
\end{defn}

A matched circle specifies a handle decomposition of an oriented surface $F(\zz)$ of genus $k$: take a $2$-dimensional $0$-handle with boundary $Z$, $2k$  $1$-handles attached along the pairs of matched points, and a $2$-handle attached to the resulting boundary. 

If we forget the matching on the circle for a moment, we can view $\cala(4k) = \bigoplus_{i}\cala(4k, i)$ as the algebra generated by certain sets of Reeb chords in $(Z\setminus z, {\bf a})$: We can view a set ${\boldsymbol{\rho}}$ of Reeb chords, no two of which share initial or final endpoints, as a strands diagram of upward-veering strands. For such a set ${\boldsymbol{\rho}}$, we define the \emph{strands algebra element associated to ${\boldsymbol{\rho}}$} to be the sum of all ways of consistently adding horizontal strands to the diagram for ${\boldsymbol{\rho}}$, and we denote this element by $a_0({\boldsymbol{\rho}})\in \cala(4k)$. The basis over $\Z/2$ from Definition \ref{alg} is in this terminology the non-zero elements of the form $I(S)a_0({\boldsymbol{\rho}})$, where $S\subset \bf a$.

For a subset ${\bf s}$ of $[2k]$, a \emph{section} of ${\bf s}$ is a set  $S\subset M^{-1}({\bf s})$, such that $M$ maps $S$ bijectively to ${\bf s}$. To each ${\bf s}\subset [2k]$ we associate an idempotent in $\cala (4k)$ given by
$$I({\bf s}) = \sum_{S \textrm{ is a section of } {\bf s}} I(S).$$
Let $\mathcal I(\zz)$ be the subalgebra generated by all $I({\bf s})$, and let ${\bf I} = \sum_{\bf s}I({\bf s})$.
\begin{defn}
The \emph{algebra $\az$ associated to a pointed matched circle $\zz$} is the subalgebra of $\cala(4k)$ generated (as an algebra) by $\mathcal I(\zz)$ and by all $a({\boldsymbol{\rho}}) :={\bf I}a_0({\boldsymbol{\rho}})\bf{ I}$. We refer to $a({\boldsymbol{\rho}})$ as the \emph{algebra element associated to ${\boldsymbol{\rho}}$}.
\end{defn}

\subsection{Type $D$ structures, $\cala_\infty$-modules, and tensor products}

We recall the definitions of the algebraic structures used in \cite{bfh2}.  For a beautiful, terse description of type $D$ structures and their basic properties, see \cite[Section 7.2]{bs}, and for a more general and detailed description of $\cala_\infty$ structures,  see \cite[Chapter 2]{bfh2}. 

Let $A$ be a unital differential graded algebra with differential $d$ and multiplication $\mu$ over a base ring ${\bf k}$. In this paper, ${\bf k}$ will always be a direct sum of copies of $\F_2 = \Z/2\Z$. When the algebra is $\cala(\mathcal Z)$, the base ring for all modules and tensor products is $\mathcal I(\mathcal Z)$.

A \emph{(right) $\cala_\infty$-module} over $A$ is a graded module $M$ over ${\bf k}$, equipped with maps 
$$m_i: M\otimes A^{\otimes (i-1)}\to M[2-i],$$ 
satisfying the compatibility conditions
\begin{align*}
0&= \sum_{i+j = n+1}m_i(m_j(\xxx, a_1, \ldots, a_{j-1}), \ldots , a_{n-1})\\
&+ \sum_{i=1}^{n-1} m_n(\xxx, a_1,\ldots, a_{i-1}, d(a_i),\ldots, a_{n-1})\\
&+ \sum_{i=1}^{n-2} m_{n-1}(\xxx, a_1,\ldots, a_{i-1}, (\mu(a_i,a_{i+1})),\ldots, a_{n-1})
\end{align*}
and the unitality conditions $m_2(\xxx, 1) = \xxx$ and $m_i(\xxx, a_1,\ldots, a_{i-1})=0$ if $i>2$ and some $a_j =1$. We say that $M$ is \emph{bounded} if  $m_i=0$ for all sufficiently large $i$.

A \emph{(left) type $D$ structure}  over $A$ is a graded module $N$ over the base ring, equipped with a homogeneous map
$$\delta:N\to (A\otimes N)[1]$$
satisfying the compatibility condition
$$(d\otimes \id_N)\circ \delta + (\mu\otimes \id_N)\circ (\id_A\otimes \delta)\circ \delta= 0.$$
We can define maps
$$\delta_k:N\to (A^{\otimes k}\otimes N)[k]$$
inductively by
$$\delta_k = \left\{  \begin{array}{ll}
\id_N & \textrm{ for } k=0\\
(\id_A\otimes \delta_{k-1})\circ \delta & \textrm{ for } k\geq 1 
\end{array}\right.$$

 A type $D$ structure is said to be \emph{bounded} if for any $\xxx \in N$, $\delta_i(\xxx) = 0$ for all sufficiently large $i$.

If $M$ is a right $\cala_\infty$-module over $A$ and $N$ is a left type $D$ structure, and at least one of them is bounded, we can define the \emph{box tensor product} $M\boxtimes N$ to be the  vector space $M\otimes N$ with differential 
$$\bdy: M\otimes N \to M\otimes N[1]$$
defined by 
$$\bdy = \sum_{k=1}^{\infty}(m_k\otimes \id_N)\circ(\id_M\otimes \delta_{k-1}).$$
The boundedness condition guarantees that the above sum is finite. In that case $\bdy^2= 0$ and $M\boxtimes N$ is a graded chain complex. In general (boundedness is not required), one can think of a type $D$ structure as a left $\ainf$ module, and take an $\ainf$ tensor product $\widetilde\otimes$, see \cite[Section 2.2]{bfh2}.

Given two differential graded algebras, four types of bimodules can be defined in a similar way. We omit those definitions and refer the reader to \cite[Section 2.2.4]{bimod}.

\subsection{Bordered three-manifolds, Heegaard diagrams, and their modules}\label{cfacfd_background}

A \emph{bordered $3$-manifold} is  a triple $(Y, \zz, \phi)$, where $Y$ is a compact, oriented $3$-manifold with connected boundary $\bdy Y$, $\zz$ is a pointed matched circle, and $\phi: F(\zz)\to \bdy Y$ is an orientation-preserving homeomorphism. A bordered $3$-manifold may be represented by a \emph{bordered Heegaard diagram}  $\mathcal H =(\Sigma, \balpha, \bbeta, z)$, where $\Sigma$ is an oriented surface of some genus $g$  with one boundary component, $\bbeta$ is a set of pairwise-disjoint, homologically independent circles in $\Int(\Sigma)$, $\balpha$ is a $(g+k)$-tuple of pairwise-disjoint curves in $\Sigma$, split into $g-k$ circles in $\Int(\Sigma)$, and $2k$ arcs with boundary on $\bdy \Sigma$, so that they are all homologically independent in $H_1(\Sigma, \bdy \Sigma)$, and $z$ is a point on $(\bdy\Sigma)\setminus (\balpha\cap \bdy \Sigma)$.
The boundary $\bdy\mathcal H$ of the Heegaard diagram has the structure of a pointed matched circle, where two points are matched if they belong to the same $\alpha$-arc. 
We can see how a bordered Heegaard diagram $\mathcal H$ specifies a bordered manifold  in the following way. Thicken up the surface  to $\Sigma\times [0,1]$, and attach a three-dimensional two-handle to each circle $\alpha_i\times \{0\}$, and a three-dimensional two-handle to each $\beta_i\times \{1\}$. Call the result $Y$, and let $\phi$ be the natural identification of $F(\bdy \mathcal H)$ with $\bdy Y$ induced by the $\alpha$-arcs. Then $(Y, \bdy \mathcal H, \phi)$ is the bordered 3-manifold for $\mathcal H$.

A \emph{generator} of a bordered Heegaard diagram $\mathcal H = (\Sigma, \balpha, \bbeta, z)$ of genus $g$ is a $g$-element subset $\xxx = \{x_1, \ldots, x_g\}$ of $\balpha\cap \bbeta$, such that there is exactly one point of $\xxx$  on each $\beta$-circle, exactly one point on each $\alpha$-circle, and at most one point on each $\alpha$-arc. Let $\mathfrak S(\mathcal H)$ denote the set of generators. Given $\xxx\in \mathfrak S(\mathcal H)$, let $o(\xxx)\subset [2k]$ denote the set of $\alpha$-arcs occupied by $\xxx$, and let $\bar o(\xxx)=[2k]\setminus o(\xxx)$ denote the set of unoccupied arcs.

Fix generators $\xxx$ and $\yyy$, and let $I$ be the interval $[0,1]$. Let $\pi_2(\xxx, \yyy)$, the \emph{homology classes from $\xxx$ to $\yyy$}, denote the elements of 
$$H_2(\Sigma\times I\times I, ((\balpha\times \{1\}\cup \bbeta\times \{0\}\cup(\bdy\Sigma\setminus z)\times I)\times I)\cup (\xxx\times I\times \{0\})\cup (\yyy\times I\times\{1\}))$$
which map to the relative fundamental class of $\xxx\times I\cup \yyy\times I$ under the composition of the boundary homomorphism and collapsing the rest of the boundary.

A homology class $B\in \pi_2(\xxx, \yyy)$ is determined by its \emph{domain}, the projection of $B$ to $H_2(\Sigma, \balpha\cup\bbeta\cup \bdy\Sigma)$. We can interpret the domain of $B$ as a linear combination of the components, or \emph{regions}, of $\Sigma\setminus (\balpha\cup \bbeta)$.

Concatenation at $\yyy\times I$, which corresponds to addition of domains, gives a product $\ast:\pi_2(\xxx, \yyy)\times \pi_2(\yyy, \www)\to \pi_2(\xxx, \www)$. This operation turns $\pi_2(\xxx, \xxx)$ into a group called the group of \emph{periodic domains}, which is naturally isomorphic to $H_2(Y, \bdy Y)$.

To a bordered Heegaard diagram $\mathcal H$, we associate either a left type $D$ structure $\cfdhat (\mathcal H)$ over $\mathcal A(-\bdy\mathcal H)$, or a right $\mathcal A_{\infty}$-module $\cfahat (\mathcal H)$ over $\mathcal A(\bdy\mathcal H)$, as follows.

Let $X(\mathcal H)$ be the $\F_2$ vector space spanned by $\mathfrak S(\mathcal H)$. Define $I_D(\xx)= \bar o(\xx)$. We define an action  on $X(\mathcal H)$ of $\mathcal I(-\bdy\mathcal H)$ by
$$I(\sss)\cdot \xx = \left\{  \begin{array}{ll}
\xx & \textrm{ if } I(\sss) = I_D(\xx)\\
0 & \textrm{ otherwise. } 
\end{array}\right.$$
Then $\cfdhat(\mathcal H)$ is the left $\mathit{dg}$ module defined as an $\mathcal A(-\bdy\mathcal H)$-module by 
$$\cfdhat (\mathcal H) = \mathcal A(-\bdy\mathcal H)\otimes_{\mathcal I(-\bdy\mathcal H)}X(\mathcal H),$$
with  differential is given by 
$$\partial(I_D(\xx)\otimes \xx) = \sum_{\yy\in \mathfrak S(\mathcal H)}a_{\xx,\yy}\otimes \yy,$$
where $a_{\xx,\yy}$ counts certain holomorphic representatives of the homology classes $B\in \pi_2(\xxx, \yyy)$ with asymptotics $\brho$, and then extended by linearity and the Leibnitz rule to all of $\cfdhat(\mathcal H)$. 
We will not describe the count $a_{\xx,\yy}$ fully, but only remark that for a pair $(B, \brho)$ to  contribute to $a_{\xx, \yy}$, a certain  moduli space needs to have expected dimension zero. This can only happen if $\ind(B, \brho) = 1$. We discuss this index in Section \ref{inv}.

As a type $D$ structure, $\cfdhat(\mathcal H)$ is the $\mathcal I(-\bdy\mathcal H)$-module $X(\mathcal H)$ with 
structure map $\delta:\cfdhat(\mathcal H)\to\cala(-\bdy\HH)\otimes_{\mathcal I(-\bdy\mathcal H)} \cfdhat(\mathcal H)$ defined by
\[\delta(\xx) = \partial(I_D(\xx)\otimes \xx).\]

Define $I_A(\xx)=  o(\xx)$. The module $\cfahat(\mathcal H)$ is generated over $\F_2$ by $X(\mathcal H)$, and the right action of $\mathcal I(\bdy \mathcal H)$ on $\cfahat(\mathcal H)$ is defined by
$$\xx\cdot I(\sss) = \left\{  \begin{array}{ll}
\xx & \textrm{ if } I(\sss) = I_A(\xx)\\
0 & \textrm{ otherwise. } 
\end{array}\right.$$
The $\ainf$ multiplication maps count certain holomorphic representatives of the homology classes defined in this section \cite[Definition 7.3]{bfh2}. Since we only discuss $\cfdhat$ in this paper, we omit a more precise definition of $\cfahat$.

\subsection{Gradings}\label{m}

It is easy to demonstrate that the algebra $\az$ has no differential $\Z$-grading with respect to which the generators are homogeneous. In \cite{bfh2}, the authors construct a grading on  $\az$ by a non-commutative group denoted by $G'(n)$, which is a central extension by $\Z$ of the relative homology group $H_1(Z\setminus z, \aaa)$. If certain choices, that we refer to as ``refinement data", are made, this grading descends to a grading in a smaller group $G(\zz)$, which is a central extension of $H_1(F)$. This is the Heisenberg group associated to the intersection form of $F$. With an additional choice of a base generator in each $\mathrm{spin}^c$ structure, one obtains a grading by a $G'(n)$-set, respectively a $G(\zz)$-set,  on any left or right module over $\az$. 

Up to $\cala_{\infty}$ homotopy equivalence, the graded $\cala_{\infty}$ module $\cfahat(\HH, \mathfrak s)$, and similarly the graded type $D$ structure  $\cfdhat(\HH, \mathfrak s)$, is independent of most of the choices made in its definition. However, it still depends on the refinement data. In addition, the set gradings in \cite{bfh2} are only defined within a specific $\mathrm{spin}^c$ structure. Given two bordered manifolds that can be glued along their boundary, the pairing theorem \cite[Theorem 10.42]{bfh2} provides a relation between the set-graded modules corresponding to the two bordered manifolds and the Maslov-graded Heegaard Floer complex for their union.

\section{A $\Z/2$ grading for closed Heegaard diagrams}\label{closedgr}

For a closed $3$-manifold $Y$ represented by a Heegaard diagram $\HH = (\Sigma, \balpha, \bbeta, z)$,  a relative $\Z/2$ grading  can be defined on $\cfhat(\HH)$  by placing two generators in the same grading if the corresponding intersection points in $\ta\cap \tb\subset \mathrm{Sym}^g(\Sigma)$ have the same sign \cite{osz14}.

In \cite[Section 2.4]{dsfh},  Friedl, Juh\'asz, and Rasmussen describe a relative $\Z/2$ grading for sutured Floer homology in terms of intersection signs of $\alpha$ and $\beta$ curves, just like the one for closed manifolds from \cite{osz14}. When $Y$ is a closed, oriented $3$-manifold, then $Y(1)$ denotes the sutured manifold $Y\setminus\Int(B^3)$ with suture an oriented simple closed curve on $\bdy B^3$. Closed Heegaard diagrams for $Y$ and sutured Heegaard diagrams for $Y(1)$ are basically the same (the latter are obtained from the former by removing a neighborhood of the basepoint $z$), 
and the homologies  $\sfh(Y(1))$ and $\hfhat(Y)$ are isomorphic as relatively graded groups.

Below, we recall  the discussion in  \cite[Section 2.4]{dsfh}, modifying it to fit the special case of closed $3$-manifolds. We also expand it with a couple of additional observations. Some of these observations may be implied in \cite{dsfh}, but we state them for the sake of completeness.

Given a closed Heegaard diagram $\HH = (\Sigma, \balpha, \bbeta, z)$ for a $3$-manifold $Y$, choosing an orientation for $\ta$ is the same as choosing a generator for $\Lambda^g(A)$, where $A$ is the subspace of $H_1(\Sigma; \R)$ spanned by the set $\balpha$. Similarly,  choosing an orientation for $\tb$ is the same as choosing a generator for $\Lambda^g(B)$, where $B$ is the subspace of $H_1(\Sigma; \R)$ spanned by $\bbeta$. Since $\mathrm{Sym}^g(\Sigma)$ inherits an orientation from $\Sigma$ viewed as the boundary of the $\alpha$-handlebody,  fixing the sign of intersection of $\ta$ and $\tb$, i.e. turing the relative $\Z/2$ grading into an absolute one, is the same as orienting $\ta$ and $\tb$ relative to each other. This is the same as orienting the tensor product $\Lambda^g(A)\otimes\Lambda^g(B)$. It turns out this is equivalent to orienting the homology of $Y$, see \cite[Definition 2.6]{dsfh}. 

We explain this last claim in a bit more detail, following \cite{dsfh}. The diagram $\HH$ specifies a handle decomposition for $Y$ with one $0$-handle, $1$-handles $A_1, \ldots, A_g$ with belt circles $\alpha_1, \ldots, \alpha_g$, $2$-handles $B_1, \ldots, B_g$ with attaching circles $\beta_1, \ldots, \beta_g$, and a $3$-handle. The Heegaard surface $\Sigma$ is the boundary of the union of  the $0$-handle and the $1$-handles. Let $C_{\ast} = C_{\ast}(Y; \R)$ be the handle homology complex for this handle decomposition (one needs to pick orientations for the cores of the handles in order to read intersections with sign and obtain real coefficients). An orientation $\omega$ of $H_1(Y; \R)\oplus H_2(Y; \R)$ determines an orientation $\omega'$ of $C_1\oplus C_2$ as follows. Choose an ordered basis $h^1_1, \ldots, h^1_n, h^2_1, \ldots, h^2_n$ for $H_1(Y; \R)\oplus H_2(Y; \R)$ compatible with $\omega$,   with $h^i_j\in H_i(Y; \R)$. Pick chains $c^i_j$  representing $h^i_j$. Extend $c^2_1, \ldots, c^2_n$ to a basis $c^2_1, \ldots, c^2_b, b_1, \ldots, b_{g-n}$ for $C_2$. Then 
$$c^1_1, \ldots, c^1_n, \bdy b_1, \ldots, \bdy b_{g-n}, c^2_1, \ldots, c^2_b, b_1, \ldots, b_{g-n}$$
is an oriented basis for $C_1\oplus C_2$. The corresponding orientation $\omega'$ does not depend on the choice of $c^i_j$ and $b_k$. 

Suppose $A_1, \ldots, A_g, B_1, \ldots, B_g$ is an ordered basis of $C_1\oplus C_2$ compatible with $\omega'$ (here we think of $A_i$ and $B_i$ as handles with oriented cores). The corresponding ordering and orientation $\alpha_1, \ldots, \alpha_g, \beta_1, \ldots, \beta_g$ of the belt circles and attaching circles induces an orientation $o(\omega)$ on  $\Lambda^g(A)\otimes\Lambda^g(B)$ by choosing as generator  the wedge product $[\alpha_1]\wedge \ldots\wedge[\alpha_g]\wedge[\beta_1]\wedge\ldots\wedge  [\beta_g]$  of the corresponding homology classes in $H_1(\Sigma; \R)$. It is not hard to see that $o(-\omega) = -o(\omega)$. We make a couple of further observations.

\begin{claim}\label{closed_reor}
Let $\mathfrak o= (\alpha_1, \ldots, \alpha_g, \beta_1,\ldots,  \beta_g)$ and
 $\mathfrak o' = (\alpha_1', \ldots, \alpha_g', \beta_1', \ldots, \beta_g')$ be two choices of ordering and orienting the two sets $\balpha$ and $\bbeta$. Suppose the choices of how to order $\balpha$ and $\bbeta$ differ by permutations $\sigma_{\alpha}$ and $\sigma_{\beta}$, respectively. Let $n_{\alpha}$ and $n_{\beta}$ be the number of $\alpha$ circles, respectively $\beta$ circles, that have  opposite orientations in $\mathfrak o$ and $\mathfrak o'$. If
 $$\mathrm{sign}(\sigma_{\alpha})\mathrm{sign}(\sigma_{\beta})(-1)^{n_{\alpha}}(-1)^{n_{\beta}} = 1,$$
 then the two choices induce the same orientation on $\Lambda^g(A)\otimes\Lambda^g(B)$. Otherwise, i.e. if the product is $-1$, the two choices induce opposite orientations on $\Lambda^g(A)\otimes\Lambda^g(B)$.
\end{claim}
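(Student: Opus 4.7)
The plan is to compute directly how the chosen generator of $\Lambda^g(A)\otimes \Lambda^g(B)$ transforms when we reorder and reorient the curves, and then to compare the two resulting generators. Recall from the discussion preceding the claim that the ordering and orientation $\mathfrak{o}=(\alpha_1,\ldots,\alpha_g,\beta_1,\ldots,\beta_g)$ selects the generator
$$[\alpha_1]\wedge\cdots\wedge[\alpha_g]\;\otimes\;[\beta_1]\wedge\cdots\wedge[\beta_g]$$
of $\Lambda^g(A)\otimes\Lambda^g(B)$, where $[\alpha_i],[\beta_j]\in H_1(\Sigma;\R)$ are the homology classes of the oriented curves. Since the two factors of this tensor product are independent, I will analyze the effect of reordering and reorienting on each factor separately and then multiply the resulting signs.

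First, I would handle reordering: by the standard alternating property of the wedge product, permuting the ordered tuple $(\alpha_1,\ldots,\alpha_g)$ by $\sigma_\alpha$ multiplies the first factor by $\mathrm{sign}(\sigma_\alpha)$, and similarly for $\beta$. Next, I would handle reorientation: reversing the orientation of a single curve $\gamma$ changes the homology class $[\gamma]\in H_1(\Sigma;\R)$ to $-[\gamma]$, so each reorientation contributes a factor of $-1$ to the corresponding wedge product. Thus reorienting $n_\alpha$ of the $\alpha$-circles contributes $(-1)^{n_\alpha}$ to the first factor and reorienting $n_\beta$ of the $\beta$-circles contributes $(-1)^{n_\beta}$ to the second factor. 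Since the reordering and reorientation operations commute on each factor (permuting and then negating selected entries is the same as negating and then permuting), the total sign obtained by passing from $\mathfrak{o}$ to $\mathfrak{o}'$ is
$$\mathrm{sign}(\sigma_\alpha)\mathrm{sign}(\sigma_\beta)(-1)^{n_\alpha}(-1)^{n_\beta}.$$

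Finally, I would conclude: the generators of $\Lambda^g(A)\otimes\Lambda^g(B)$ determined by $\mathfrak{o}$ and by $\mathfrak{o}'$ differ by exactly this sign, so the two induced orientations agree precisely when the product equals $+1$ and are opposite when it equals $-1$. Since this is essentially a multilinear algebra bookkeeping argument, I do not expect any real obstacle; the only subtlety is to verify that the reordering and reorientation operations on each factor may indeed be performed independently, which follows from the fact that neither operation mixes $\balpha$-data with $\bbeta$-data and that scaling and permutation commute on the exterior algebra.
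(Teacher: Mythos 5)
Your argument is correct and is exactly the paper's approach: the paper's entire proof is the one-line observation that the claim ``follows directly from the anti-commutativity of the wedge product,'' which is precisely the multilinear bookkeeping you carry out in detail (permutations contribute $\mathrm{sign}(\sigma)$, reorientations negate homology classes, and the two factors transform independently). No gaps; your writeup is simply a more explicit version of the same argument.
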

\begin{proof}
This follows directly from the anti-commutativity of the wedge product.
\end{proof}

One can compute the local intersection sign of $\ta$ and $\tb$ from the Heegaard diagram in the following way. Suppose $\mathfrak o= (\alpha_1, \ldots, \alpha_g, \beta_1,\ldots,  \beta_g)$ is an ordering and orientation of the $\alpha$ and $\beta$ circles. Suppose $\xxx$  is a generator of the Heegaard diagram, and write $\xxx = (x_1, \ldots, x_g)$ with $x_i\in \alpha_i$.   Let $\sigma_{\xxx}$ be the permutation for which  $x_i \in \alpha_i\cap \beta_{\sigma_{\xxx}(i)}$.

\begin{claim}
The local intersection sign of $\ta\cap \tb$ at the generator $\xxx$ with respect to the orientation on $\Lambda^g(A)\otimes\Lambda^g(B)$ induced by $\mathfrak o$ can be computed by the formula
$$s(\xxx) = \mathrm{sign}(\sigma_{\xxx})\prod_{i=1}^g s(x_i)$$
\end{claim}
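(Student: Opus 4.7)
The plan is to compute the intersection sign of $\ta\cap\tb$ at $\xxx$ by working in a local product chart for $\mathrm{Sym}^g(\Sigma)$ near $\xxx$. Since the points $x_1,\ldots,x_g$ of $\xxx$ are distinct, the quotient $\Sigma^g\to\mathrm{Sym}^g(\Sigma)$ restricts to an orientation-preserving local diffeomorphism near any lift of $\xxx$ to $\Sigma^g$ (being holomorphic for the complex structure on $\Sigma$). I would fix the lift $(x_1,\ldots,x_g)$ ordered by the $\balpha$-labeling, so that the $i$-th factor of $\Sigma^g$ contains $x_i\in\alpha_i\cap\beta_{\sigma_\xxx(i)}$. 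In this chart $\ta$ is the product $\alpha_1\times\cdots\times\alpha_g$ with tangent orientation $a_1\wedge\cdots\wedge a_g$, where $a_i$ is the positive tangent to $\alpha_i$, and $\tb$ is the reordered product $\beta_{\sigma_\xxx(1)}\times\cdots\times\beta_{\sigma_\xxx(g)}$.

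The first step is to determine the induced orientation on $T_\xxx\tb$ in this chart. The torus $\tb$ inherits its orientation from the canonical lift $\beta_1\times\cdots\times\beta_g\subset\Sigma^g$ with positive basis $(b_1,\ldots,b_g)$, but our $\balpha$-ordered lift differs from the canonical $\bbeta$-ordered lift by the permutation $\sigma_\xxx$ acting on the factors of $\Sigma^g$. Pushing the canonical orientation through this holomorphic permutation and re-expressing in the factor order of our chart introduces the sign $\mathrm{sign}(\sigma_\xxx)$, so the orientation of $T_\xxx\tb$ is represented by $\mathrm{sign}(\sigma_\xxx)\cdot b_{\sigma_\xxx(1)}\wedge\cdots\wedge b_{\sigma_\xxx(g)}$.

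For the final comparison, I would write the ambient orientation of the local chart as $\omega_1\wedge\cdots\wedge\omega_g$, where $\omega_i$ is the canonical orientation of $T_{x_i}\Sigma$. The definition of the local intersection sign $s(x_i)$ on the Heegaard surface gives $a_i\wedge b_{\sigma_\xxx(i)}=s(x_i)\,\omega_i$, so the ambient form equals $\prod_i s(x_i)\cdot\bigwedge_i(a_i\wedge b_{\sigma_\xxx(i)})$. Taking the ratio of $(T_\xxx\ta)\wedge(T_\xxx\tb)=\mathrm{sign}(\sigma_\xxx)\,(a_1\wedge\cdots\wedge a_g)\wedge(b_{\sigma_\xxx(1)}\wedge\cdots\wedge b_{\sigma_\xxx(g)})$ to this ambient form, after shuffling the $a$'s and $b$'s into a common order, yields the intersection sign $s(\xxx)=\mathrm{sign}(\sigma_\xxx)\prod_i s(x_i)$; the uniform $(-1)^{g(g-1)/2}$ produced by the shuffle is constant across generators and is absorbed into the convention relating $\mathfrak o$ to an orientation on $\Lambda^g(A)\otimes\Lambda^g(B)$. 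The main bookkeeping obstacle is tracking the two independent permutation contributions---one from the change of lift that enters the orientation of $T_\xxx\tb$, and one from the wedge shuffle in the ambient form---without conflating them, since they live on different tangent spaces.
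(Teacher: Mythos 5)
Your argument is correct, but note that the paper does not actually prove this claim from scratch: its ``proof'' is a one-line citation of Lemma~2.8 of Friedl--Juh\'asz--Rasmussen, specialized to closed diagrams. What you have written is essentially the content of that cited lemma, namely the standard local computation in a product chart of $\mathrm{Sym}^g(\Sigma)$ near a generator with distinct coordinates. Your two key steps are both handled correctly: the quotient $\Sigma^g\to\mathrm{Sym}^g(\Sigma)$ is holomorphic, hence orientation-preserving as a local diffeomorphism off the diagonal, so permuting the (even-dimensional) factors costs nothing on the ambient space while the reordering of the one-dimensional tangent lines to $\tb$ contributes exactly $\mathrm{sign}(\sigma_{\xxx})$; and the interleaving of the $a_i$'s and $b_{\sigma_{\xxx}(i)}$'s converts the product of local signs $s(x_i)$ into the comparison with the ambient orientation. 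The one point you rightly flag but do not fully discharge is the universal factor $(-1)^{g(g-1)/2}$ from the shuffle. Since it is independent of the generator, it only affects the overall normalization, i.e.\ which orientation of $\Lambda^g(A)\otimes\Lambda^g(B)$ the formula is measured against --- and that normalization is exactly what the conventions of the cited lemma (and, downstream, the choice of $o_{\mathrm{can}}$) are designed to fix. So your proof establishes the claim up to this convention, which is all the relative-to-absolute bookkeeping in this section requires; a fully self-contained treatment would pin down the sign by spelling out precisely how $\mathfrak o$ induces the orientation on $\Lambda^g(A)\otimes\Lambda^g(B)$ versus on $\ta\times\tb$.
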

\begin{proof}
This is just \cite[Lemma 2.8]{dsfh} specialized to the case of a closed Heegaard diagram.
\end{proof}

Poincar\'e duality specifies a canonical homology orientation $\omega_{\mathrm{PD}}$ on $H_1(Y; \R)\oplus H_2(Y;  \R)$ as follows. Let $b_1, \ldots, b_g$ be any basis for $H_1(Y; \R)$, and let $b_1^{\ast}, \ldots, b_g^{\ast}$ be the dual basis for $H_2(Y; \R)$, i.e. the basis for  which $b_i\cdot b_j^{\ast} = \delta_{ij}$. Then $\omega_{\mathrm{PD}}$ is the orientation given by the basis $b_1, \ldots, b_g, b_1^{\ast}, \ldots, b_g^{\ast}$. This corresponds to a canonical orientation $o_{\mathrm{can}}:=o(\omega_{\mathrm{PD}})$ on $\Lambda^g(A)\otimes\Lambda^g(B)$.

The above discussion is suited for manifolds that are not rational homology spheres.
In the case of  a homology sphere, $H_1(Y; \R)\oplus H_2(Y; \R)$ is zero-dimensional, but since we study Heegaard diagrams of genus $1$ or higher, $C_1\oplus C_2$ is not. There is still a canonical orientation on  $C_1\oplus C_2$ given by picking any basis $b_1, \ldots, b_g$ for $C_2$ and taking the oriented basis 
$$\bdy b_1, \ldots, \bdy b_g, b_1, \ldots, b_g$$
for $C_1\oplus C_2$. This  corresponds to a canonical orientation $o_{\mathrm{can}}$ on $\Lambda^g(A)\otimes\Lambda^g(B)$.

The relative $\Z/2$ grading on $\cfhat(\HH)$ can be turned into an absolute grading by taking the canonical orientation $o_{\mathrm{can}}$ on $\Lambda^g(A)\otimes\Lambda^g(B)$, and defining the grading of a generator $\xxx$ to be the number $m(\xxx)$ such that $(-1)^{m(\xxx)}$ is the intersection sign $s(\xxx)$ of $\ta$ and $\tb$ at $\xx$. 

Note that our grading convention differs from the one in \cite[Definition 2.9]{dsfh} by a factor of $(-1)^{b_1(Y)}$. Thus, Remark 2.10 of \cite{dsfh} implies that our  $\Z/2$ grading above agrees with the $\Z/2$ grading from \cite{osz14}. Just for fun, we prove this remark in the case of homology spheres. 

\begin{proposition} \label{gr_matrix}
Suppose $H_1(Y; \Z)$ is finite, and let $\HH$ be a Heegaard diagram for $Y$ of genus $g$. Then an ordering and orientation  $\mathfrak o= (\alpha_1, \ldots, \alpha_g, \beta_1,\ldots,  \beta_g)$ of the curves on $\HH$ is compatible with the canonical orientation $o_{\mathrm{can}}$ on $\Lambda^g(A)\otimes\Lambda^g(B)$ if and only if the intersection matrix  with entries $m_{ij} =\alpha_i\cap \beta_j$ has positive determinant.
\end{proposition}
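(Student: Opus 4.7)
The plan is to reduce the question to a change-of-basis determinant inside the handle chain complex $C_1 \oplus C_2$, and then identify that determinant with $\det M$ using the cellular boundary formula.

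First, I would orient the cores of the $1$-handles $A_i$ and the $2$-handles $B_j$ so that the orientations induced on their belt and attaching circles agree with the given ordering and orientation $\mathfrak o = (\alpha_1, \ldots, \alpha_g, \beta_1, \ldots, \beta_g)$. Under these choices, the statement that $o(\mathfrak o)$ equals $o_{\mathrm{can}}$ on $\Lambda^g(A) \otimes \Lambda^g(B)$ becomes the statement that the ordered basis $(A_1, \ldots, A_g, B_1, \ldots, B_g)$ of $C_1 \oplus C_2$ is compatibly oriented with the canonical orientation defined earlier.

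Next, since $H_1(Y;\Z)$ is finite, the boundary map $\bdy \co C_2 \to C_1$ is a linear isomorphism over $\R$. Taking $b_j = B_j$ in the definition of the canonical orientation, the canonical oriented basis of $C_1 \oplus C_2$ is $(\bdy B_1, \ldots, \bdy B_g, B_1, \ldots, B_g)$. Comparing with $(A_1, \ldots, A_g, B_1, \ldots, B_g)$, the $C_2$-factor is identical, while on the $C_1$-factor the change-of-basis matrix is the matrix $P$ of $\bdy$ in the bases $(B_j)$ and $(A_i)$, so that $\bdy B_j = \sum_i P_{ij} A_i$. Hence the two ordered bases determine the same orientation of $C_1 \oplus C_2$ if and only if $\det P > 0$.

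Finally, I would invoke the cellular boundary formula, which identifies $P_{ij}$ with the signed intersection number on $\Sigma$ of the attaching circle $\beta_j$ of $B_j$ with the belt circle $\alpha_i$ of $A_i$. Since the $\alpha_i$ and $\beta_j$ used to form the intersection matrix are precisely these belt and attaching circles with the orientations prescribed by $\mathfrak o$, this signed count equals $m_{ij}$, so $P = M$ and the proposition follows. The main obstacle is a careful compatibility of sign conventions: one must verify that the orientation of $\Sigma$ as the boundary of the $\alpha$-handlebody, the conventions for orienting cores and cocores, and the sign in the cellular boundary formula all line up so that $P_{ij} = m_{ij}$ rather than $-m_{ij}$ (any global sign mismatch in the identification $P = M$ would be fatal, since it would flip the conclusion on all odd-genus diagrams). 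This is a routine but delicate bookkeeping, and may be checked once and for all on a standard genus-$1$ diagram for $S^3$.
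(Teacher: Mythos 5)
Your proposal is correct and follows essentially the same route as the paper's proof: both reduce compatibility with $o_{\mathrm{can}}$ to the sign of the determinant of the boundary map $\bdy\co C_2\to C_1$ in the handle bases determined by $\mathfrak o$, and then identify that boundary matrix with the intersection matrix $M$ via the fact that the attaching circle of $B_j$ meets the belt circle of $A_i$ with total signed multiplicity $m_{ij}$. Your explicit flag about the potential global sign mismatch is a reasonable extra caution, but the argument is the same.
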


\begin{proof}
The canonical orientation $o_{\mathrm{can}}$ on $\Lambda^g(A)\otimes \Lambda^g(B)$  is given by picking a basis $b_1, \ldots, b_g$ for $C_2$, and preceding it with the basis $\bdy b_1, \ldots, \bdy b_g$ for $C_1$ to obtain an orientation for $C_1\oplus C_2$. In other words, we pick a basis so that the boundary map $\bdy:C_2\to C_1$ is the identity. Then an ordered basis of handles with oriented cores $A_1, \ldots, A_g, B_1, \ldots, B_g$   for  $C_1\oplus C_2$ is compatible with this orientation exactly when the  matrix for the boundary map with respect to this basis has positive determinant. Let $n_{ij}$ be the entries of this boundary matrix. This means that the attaching circle $\beta_j$ for the handle $B_j$ runs $n_{ij}$ times along $A_i$, so the cocore $\alpha_i$ of $A_i$ intersects $\beta_j$ with multiplicity $n_{ij}$. Thus, $m_{ij} = n_{ij}$, i.e. the ordering and orientation $\mathfrak o= (\alpha_1, \ldots, \alpha_g, \beta_1,\ldots,  \beta_g)$ corresponding to the ordered basis $A_1, \ldots, A_g, B_1, \ldots, B_g$ is compatible with  $o_{\mathrm{can}}$  exactly when the intersection matrix with respect to $\mathfrak o$ has positive determinant.
\end{proof}

\begin{corollary}
For homology spheres, the absolute grading defined in this section agrees with the grading defined by Ozsv\'ath and Szab\'o in \cite{osz14}.
\end{corollary}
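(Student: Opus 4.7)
The plan is to characterize both gradings by the positivity of the Euler characteristic and invoke Proposition~\ref{gr_matrix}.

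First I would recall the characterization of the Ozsv\'ath--Szab\'o absolute $\Z/2$ grading: for a rational homology sphere $Y$, they normalize so that $\chi(\hfhat(Y,\mathfrak s)) = +1$ for every $\mathrm{spin}^c$ structure $\mathfrak s$. Since a $\Z/2$ grading on $\cfhat(\HH)$ is rigid up to an overall shift by $1$, two such gradings agree if and only if they give the Euler characteristic the same sign. So it suffices to show that, for $Y$ a homology sphere, the grading $m$ defined via $o_{\mathrm{can}}$ satisfies $\chi = \sum_{\xxx} (-1)^{m(\xxx)} = +1$.

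Next I would expand the right-hand side using the local-sign formula $s(\xxx) = \mathrm{sign}(\sigma_\xxx)\prod_i s(x_i)$ proved just above. Writing $M = (m_{ij})$ for the intersection matrix $m_{ij} = \alpha_i \cdot \beta_j$, the Leibniz expansion gives
\[
\det M = \sum_{\sigma \in S_g} \mathrm{sign}(\sigma) \prod_{i=1}^g \Big(\sum_{p \in \alpha_i \cap \beta_{\sigma(i)}} s(p)\Big) = \sum_{\xxx \in \mathfrak S(\HH)} \mathrm{sign}(\sigma_\xxx)\prod_{i=1}^g s(x_i) = \sum_{\xxx} s(\xxx),
\]
where the middle equality reindexes tuples in the product by the generators $\xxx$ they define. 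Thus the Euler characteristic of $\cfhat(\HH)$ relative to any ordering and orientation $\mathfrak o$ equals $\det M$.

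For a homology sphere, $|\det M| = |H_1(Y;\Z)| = 1$. By Proposition~\ref{gr_matrix}, $\mathfrak o$ is compatible with the canonical orientation $o_{\mathrm{can}}$ precisely when $\det M > 0$; combined with the identity above, this forces $\chi(\cfhat(\HH)) = +1$ for the grading defined in this section. Since the Ozsv\'ath--Szab\'o grading is characterized by the same positivity condition, the two $\Z/2$ gradings must coincide. The only real content is the computation $\chi = \det M$, which is essentially the Leibniz formula; no obstacle beyond bookkeeping is expected.
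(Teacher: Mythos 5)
Your proposal is correct and follows essentially the same route as the paper: both arguments reduce to the observation that the Ozsv\'ath--Szab\'o grading is characterized by $\chi(\hfhat(Y)) = \#(\ta\cap\tb) > 0$, that this algebraic intersection count equals $\det M$, and that Proposition \ref{gr_matrix} identifies compatibility with $o_{\mathrm{can}}$ with $\det M > 0$. The only difference is that you spell out the identity $\chi = \det M$ via the Leibniz expansion, which the paper simply asserts "by definition."
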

\begin{proof}
Recall that the grading from \cite{osz14} is defined by requiring that 
$$\chi(\hfhat(Y)) = |H_1(Y; \Z)|.$$
But $\chi(\hfhat(Y))$ is the sum of the gradings of all generators, i.e. $ \#(\ta\cap\tb)$. In other words, the grading from \cite{osz14} is defined by orienting the two tori $\ta$ and $\tb$ so that their intersection sign $ \#(\ta\cap\tb)$ is positive. 

On the other hand, given any orientation for the  two tori, a compatible ordering and orientation on the $\alpha$ and $\beta$ curves induces an intersection matrix $M$  with entries $m_{ij} =\alpha_i\cap \beta_j$, as in Proposition \ref{gr_matrix}, and by definition  $\det(M) = \#(\ta\cap\tb)$. So the requirement for the absolute grading from this section that  $\det(M)>0$ also translates to requiring that the tori are oriented so that  $ \#(\ta\cap\tb)>0$. 

Thus, the two gradings are the same.
\end{proof}

\section{The relative grading for bordered Heegaard Floer homology}\label{rel}

We review and extend the definition of the relative grading  from \cite{dec}.

\subsection{The grading on the algebra}
Let $\zz$ be a pointed matched circle, and let $k$ be the genus of the surface $F(\zz)$. Given a Heegaard diagram $\HH$ with $\bdy \HH=\zz$, recall that the $4k$ points  $\balpha\cap \zz$ come with an ordering $\lessdot$ induced by the orientation of $\zz\setminus z$ \cite[Section 3.2]{bfh2}. For any $\alpha$-arc $\alpha_i$, label its endpoints  $\alpha_i^-$ and $\alpha_i^+$, so that $\alpha_i^-\lessdot\alpha_i^+$, and 
order the $2k$ $\alpha$-arcs so that $\alpha_1^-\lessdot\alpha_2^-\lessdot\ldots\lessdot\alpha_{2k}^-$. Write the matching  as $M(\alpha_i^-)= M(\alpha_i^+) = i$, so that each idempotent $I(\sss)$ corresponds to the set of $\alpha$-arcs indexed by $\sss\subset [2k]$.  Given a set $\sss \subset [2k]$, let $J(\sss)$ denote the multi-index, i.e. ordered set, $(j_1, \ldots, j_n)$ for which $ j_1<\ldots < j_n$ and $\{j_1, \ldots, j_n\} = \sss$.

We define a grading on the algebra $\az$ by  looking at the diagram for the bimodule $\az$ that  was studied in \cite{auroux} (labeled $(\hat F, \{\tilde \alpha^-_i\}, \{\tilde \alpha^+_i\})$), and in \cite[Section 4]{hfmor} (labeled $\mathrm{AZ}(\zz)$).  Figure \ref{az} is an example of $\mathrm{AZ}(\zz)$ when $\zz$ is the split pointed matched circle of genus $2$. Let  $\bdy_{\alpha}\textrm{AZ}(\zz)$ denote the boundary component of $\mathrm{AZ}(\zz)$ which intersects the $\alpha$-arcs, and let  $\bdy_{\beta}\textrm{AZ}(\zz)$ denote the boundary component of $\mathrm{AZ}(\zz)$ which intersects the $\beta$-arcs. Order the $\alpha$-arcs and label their endpoints as above, i.e. following the orientation of $\bdy_{\alpha}\textrm{AZ}(\zz)\setminus z$, and do the same for the $\beta$-arcs,  i.e. following the orientation of $\bdy_{\beta}\textrm{AZ}(\zz)\setminus z$. For each $i$, orient $\alpha_i$ from $\alpha_i^-$ to $\alpha_i^+$, and  $\beta_i$ from $\beta_i^-$ to $\beta_i^+$. For any point $x\in \balpha\cap \bbeta$, define $s(x)$ to be the intersection sign of $\balpha$ and $\bbeta$ at $x$.  Note that the intersection sign of $\alpha_i$ and $\beta_i$ at the diagonal of the triangle is positive. 

\begin{figure}[h]
 \centering
       \labellist
       \pinlabel $z$ at 138 138
       \pinlabel $\alpha_1^-$ at 155 18
       \pinlabel $\alpha_2^-$ at 155 34
       \pinlabel $\alpha_1^+$ at 155 50
       \pinlabel $\alpha_2^+$ at 155 66
       \pinlabel $\alpha_3^-$ at 155 82
       \pinlabel $\alpha_4^-$ at 155 98
       \pinlabel $\alpha_3^+$ at 155 114
       \pinlabel $\alpha_4^+$ at 155 130
       \endlabellist
       \includegraphics[scale=.95]{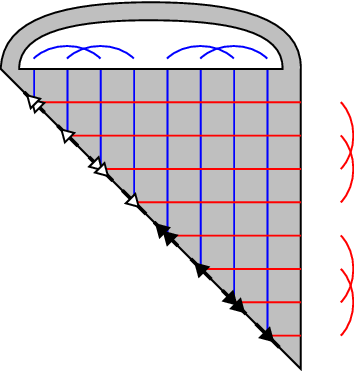} 
       \vskip .2 cm
       \caption{The diagram $\mathrm{AZ}(\zz)$.}
       \label{az}
\end{figure}

Recall that the generators $\mathfrak S(\mathrm{AZ}(\zz))$ are in one-to-one correspondence with the standard generators of $\az$ by strand diagrams. We will denote a generator of $\az$ and the corresponding generator in $\mathfrak S(\mathrm{AZ}(\zz))$ the same way.  Given a generator ${\bf a}$ of $\az$, write its representative in  $\mathfrak S(\mathrm{AZ}(\zz))$ as an ordered subset $\aaa = (x_1, \ldots, x_p)$ of $\balpha\cap\bbeta$, with the intersection points $x_i$ ordered according to the order of the corresponding occupied $\alpha$-arcs.  For a generator $\xxx\in \mathfrak S(\mathrm{AZ}(\zz))$, let  $o_\alpha(\xxx)$ be the set of  $\alpha$-arcs occupied by $\xx$, and let  $o_\beta(\xxx)$ be the set of $\beta$-arcs  occupied by $\xx$.
Define $\sigma_{\xxx}$ to be the permutation for which 
\begin{align*}
x_1 &\in \alpha_{i_1}\cap \beta_{j_{\sigma_{\xxx}(1)}}\\
& \hspace{7pt} \vdots\\
x_p &\in \alpha_{i_p}\cap \beta_{j_{\sigma_{\xxx}(p)}}\\
\end{align*}
where $(i_1, \ldots, i_p) = J(o_\alpha(\xxx))$ and $(j_1, \ldots, j_p) = J(o_\beta(\xxx))$. In other words, $\sigma_{\xxx}$ is the permutation arising from the induced orders on the two sets of occupied arcs.
Define the sign of $\xxx$ by 
$$s(\xxx) = \textrm{sign}(\sigma_{\xxx})\prod_{i=1}^p s(x_i).$$

The following is a slight modification of \cite[Lemma 19]{dec}. 

\begin{proposition}
The sign assigment $s$ induces a differential grading $m$ on $\az$, viewed as a left-right $\az$-$\az$-bimodule. More precisely, define $m: \mathfrak S(\mathrm{AZ}(\zz))\to \Z/2$ by  $s= (-1)^m$ (we can also think of $m$ as a function on the strand diagram generators of $\az$ via the identification with the generators of $\mathrm{AZ}(\zz)$). Then for any $\xx\in \mathfrak S(\mathrm{AZ}(\zz))$ and any generator $\aaa\in \az$ 
\begin{enumerate}
\item $m(\xx \aaa) = m(\xx) + m(\aaa)$, 
\item $m(\aaa \xx) = m(\aaa) + m(\xx)$, and
\item $m(\bdy \xx) = m(\xx)-1$.
\end{enumerate}
\end{proposition}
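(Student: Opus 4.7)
The argument refines the proof of \cite[Lemma 19]{dec}, whose setup already identifies $\az$-generators with points in $\mathfrak S(\mathrm{AZ}(\zz))$ and defines $s$ via permutation signs and local intersection signs. What is new here is verifying all three properties simultaneously and doing so for both left and right actions. I would first handle the differential property (3), then deduce (1) by a direct analysis of strand-diagram multiplication, and finally obtain (2) from (1) by the $\alpha$-$\beta$ reflection symmetry of $\mathrm{AZ}(\zz)$ that swaps the two boundary components (and therefore the two actions).

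\emph{Step for (3).} A nonzero term in $\bdy \xx$ is produced by resolving a single inversion $(p,q)$ of $\phi_\xx$, with $p<q$ and $\phi_\xx(p)>\phi_\xx(q)$. This replaces the two strand-diagram endpoints $x_a\in\alpha_p\cap\beta_{\phi(p)}$ and $x_b\in\alpha_q\cap\beta_{\phi(q)}$ by $x_a'\in\alpha_p\cap\beta_{\phi(q)}$ and $x_b'\in\alpha_q\cap\beta_{\phi(p)}$, while leaving all other intersection points of $\xx$ unchanged. The permutation $\sigma_{\xx'}$ is obtained from $\sigma_{\xx}$ by transposing the two image positions, so $\mathrm{sign}(\sigma_{\xx'})=-\mathrm{sign}(\sigma_\xx)$. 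I would then check locally in $\mathrm{AZ}(\zz)$, using the standard triangular configuration in which each $\alpha_i$ is oriented from $\alpha_i^-$ to $\alpha_i^+$ and similarly for $\beta_j$, that the four points $x_a,x_b,x_a',x_b'$ sit in a rectangle whose local intersection signs satisfy $s(x_a)s(x_b)=s(x_a')s(x_b')$. Combined with the permutation flip this gives $s(\xx')=-s(\xx)$, hence $m(\bdy \xx)=m(\xx)-1$.

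\emph{Step for (1).} If $\xx=(S,T,\phi)$ and $\aaa=(T,U,\psi)$ with $\inv(\psi\circ\phi)=\inv(\phi)+\inv(\psi)$, the product $\xx\aaa=(S,U,\psi\circ\phi)$ is another generator (nonzero terms only), otherwise there is nothing to check. I would compare the sign data of $\xx\aaa$ with the pair $(\xx,\aaa)$ as follows: the ordered-arc convention makes $\sigma_{\xx\aaa}$ (after reindexing the intermediate arcs of $T$) the composition $\sigma_\aaa\circ\sigma_\xx$, so $\mathrm{sign}(\sigma_{\xx\aaa})=\mathrm{sign}(\sigma_\xx)\mathrm{sign}(\sigma_\aaa)$. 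The remaining factor is the product of local intersection signs, and I would show by inspection of the AZ diagram that each contributing crossing for $\xx\aaa$ sits in the same local region as the corresponding crossings for $\xx$ and $\aaa$, so the contributions match exactly. This yields $s(\xx\aaa)=s(\xx)s(\aaa)$ and hence (1). Statement (2) then follows by reflecting $\mathrm{AZ}(\zz)$ across the line exchanging its $\alpha$-boundary and $\beta$-boundary: this reflection carries the sign assignment $s$ to itself while interchanging left and right multiplication, so (1) for the reflected algebra translates to (2) for the original one.

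\emph{Main obstacle.} The main obstacle is the local sign computation at the heart of (3): verifying that the product $s(x_a)s(x_b)s(x_a')s(x_b')=1$ uniformly, regardless of the relative sizes of $p,q,\phi(p),\phi(q)$ and of the arcs' positions in the matching. Once that local invariance is established, the rest of the argument is routine bookkeeping with permutations and the additivity $\inv(\psi\circ\phi)=\inv(\phi)+\inv(\psi)$ that controls nonvanishing of products in $\az$.
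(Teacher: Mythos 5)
Your proposal is correct and follows essentially the same route as the paper: the paper's proof simply cites \cite[Lemma 19]{dec}, where properties (1) and (3) are established by exactly the kind of local sign analysis in $\mathrm{AZ}(\zz)$ you describe (rectangles in the interior for the differential, the half-strip/concatenation picture for right multiplication), and then obtains (2) by repeating that argument with the $\alpha$ and $\beta$ labels exchanged --- which is precisely your reflection-symmetry step. Your local claim $s(x_a)s(x_b)=s(x_a')s(x_b')$ for the four corners of a resolution rectangle is the correct key point (each curve meets exactly two of the four corners, so the product of all four signs is orientation-independent and equals $+1$).
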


\begin{proof} In the proof of \cite[Lemma 19]{dec} we verified that the grading respects the right multiplication and the differential, by analyzing sets of half-strips with boundary on  $\bdy_{\alpha}\textrm{AZ}(\zz)$, and rectangles in the interior of $\mathrm{AZ}(\zz)$, respectively. Here, we also need to check multiplication on the left, by looking at half-strips with boundary on $\bdy_{\beta}\textrm{AZ}(\zz)$. The argument from the proof of \cite[Lemma 19]{dec} can be repeated almost verbatim, exchanging the $\alpha$ and $\beta$ labels.
\end{proof}

\subsection{Type $D$ structures} \label{dgr}

Next, we define a $\Z/2$ grading on $\cfdhat$, both for $\alpha$-bordered and $\beta$-bordered Heegaard diagrams. The former are the Heegaard diagrams we recalled in Section \ref{cfacfd_background}; for the latter, see \cite[Section 3.2]{hfmor}.

Given an $\alpha$-bordered Heegaard diagram $\hha$ with $\bdy\hha = -\zz$, order  the $\alpha$-arcs as above,   according to the orientation on $-\bdy \hha$ and starting at the basepoint, and orient them from $\alpha_i^+$ to $\alpha_i^-$. Also order and orient the $\alpha$ and $\beta$ circles, and define a complete ordering on all $\alpha$-curves by $\alpha_1, \ldots, \alpha_{2k}, \alpha^c_1, \ldots, \alpha^c_{g-k}$. Write generators as ordered tuples $\xxx = (x_1, \ldots, x_g)$ to agree with the ordering of the occupied $\alpha$-curves, and for any generator $\xxx$,  define $\sigma_{\xxx}$ to be the permutation for which 
\begin{align*}
x_1 &\in \alpha_{i_1}\cap \beta_{\sigma_{\xxx}(1)}\\
& \hspace{7pt} \vdots\\
x_k &\in \alpha_{i_k}\cap \beta_{\sigma_{\xxx}(k)}\\
x_{k+1} &\in \alpha_1^c\cap \beta_{\sigma_{\xxx}(k+1)}\\
&\hspace{7pt}  \vdots\\
 x_g &\in \alpha_{g-k}^c\cap \beta_{\sigma_{\xxx}(g)},
\end{align*}
where $(i_1, \ldots, i_k) = J(o(\xxx))$.
For any $x_i$, define $s(x_i)$ to be the intersection sign of $\balpha$ and $\bbeta$ at $x_i$.  We also define  $\sigma_\sss$ for each $k$-element set $\sss\subset [2k]$ to be the permutation in $S_{2k}$ that maps the ordered set $(1,\ldots, k)$ to $J(\sss)$ and $(k+1, \ldots, 2k)$ to $J([2k]\setminus \sss)$.
Last, define the sign of $\xxx$ by 
$$s(\xxx) = \textrm{sign}(\sigma_{o(\xxx)})\textrm{sign}(\sigma_{\xxx})\prod_{i=1}^g s(x_i).$$
This description of the sign can be quite frustrating to follow, so we work out a small example in detail at the end of this section. 

Similarly, given a $\beta$-bordered Heegaard diagram $\hhb$ with $\bdy\hhb = -\zz$, order  the $\beta$-arcs as above,   according to the orientation on $-\bdy \hhb$, and orient them from $\beta_i^+$ to $\beta_i^-$. Also order and orient the $\alpha$ and $\beta$ circles, and define a complete ordering on all $\beta$-curves by $\beta^c_1, \ldots, \beta^c_{g-k},\beta_1, \ldots, \beta_{2k}$. Write generators as ordered tuples $\xxx = (x_1, \ldots, x_g)$ to agree with the ordering of the occupied $\beta$-curves, and for any generator $\xxx$,  define $\sigma_{\xxx}$ to be the permutation for which 
\begin{align*}
x_1 &\in \alpha_{\sigma_{\xxx}(1)}\cap \beta_1^c\\
& \hspace{7pt} \vdots\\
x_{g-k} &\in \alpha_{\sigma_{\xxx}(g-k)}\cap \beta_{g-k}^c\\
x_{g-k+1} &\in \alpha_{\sigma_{\xxx}(g-k+1)}\cap \beta_{i_1}\\
&\hspace{7pt}  \vdots\\
 x_g &\in \alpha_{\sigma_{\xxx}(g)}\cap \beta_{i_k},
\end{align*}
where $(i_1, \ldots, i_k) = J(o(\xxx))$.
 Define $s(x_i)$ and $\sigma_{\sss}$ as for $\alpha$-bordered diagrams, and again define the sign of $\xxx$ by 
$$s(\xxx) = \textrm{sign}(\sigma_{o(\xxx)})\textrm{sign}(\sigma_{\xxx})\prod_{i=1}^g s(x_i).$$

\begin{proposition}
The unique function $m: \mathfrak S(\HH)\to \Z/2$  for which $s= (-1)^m$ is a differential grading on $\cfdhat(\HH)$: if $\xx\in \mathfrak S(\HH)$ and $\aaa = a(\brho)\in \cala(-\bdy \HH)$, then $\xx$ and $\aaa$ are homogeneous with respect to the grading, and 
\begin{enumerate}
\item $m(\aaa \xx) =  m(\aaa) + m(\xx)$, and
\item  $m(\bdy \xx) = m(\xx)-1$.
\end{enumerate} 
\end{proposition}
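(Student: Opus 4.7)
My plan is to imitate the proof of Lemma 19 in \cite{dec}, reducing both (1) and (2) to local sign computations for elementary domains and combining these with the bimodule grading on $\az$ that has just been established. I will treat the $\alpha$-bordered case in detail; the $\beta$-bordered case follows by interchanging the roles of $\alpha$ and $\beta$ throughout.

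For (2), a summand $\yy$ in $\bdy \xx$ arises from an index-$1$ pair $(B, \emptyset)$: the domain $B$ has no Reeb chord asymptotics, so its boundary lies entirely in $\balpha \cup \bbeta$ and does not meet $\bdy\HH$. In particular $o(\xx) = o(\yy)$, so the factors $\mathrm{sign}(\sigma_{o(\xx)})$ and $\mathrm{sign}(\sigma_{o(\yy)})$ coincide and drop out of the ratio $s(\yy)/s(\xx)$. What remains is precisely the sign ratio computed in Section \ref{closedgr} for closed Heegaard diagrams. By additivity of domains, it suffices to verify the identity on elementary building blocks (bigons and rectangles), where it is immediate from the local intersection signs at the corners and the transposition encoded in $\sigma_{\xx}$.

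For (1), I will use the gluing construction of \cite[Section 4]{hfmor}: for $\aaa = a(\brho)$, form the glued diagram $\HH' = \mathrm{AZ}(\zz) \cup_{\bdy} \HH$, in which each term of the action $\aaa \cdot \xx \to \yy$ is realized by an index-$1$ domain on $\HH'$ with no Reeb chord asymptotics, connecting a composite generator $\aaa \cup \xx$ to a composite generator $\mathbf{1} \cup \yy$ (where $\mathbf{1}$ denotes the appropriate idempotent generator of $\mathrm{AZ}(\zz)$). Applying (2) on $\HH'$ then reduces (1) to the sign factorization
$$s_{\HH'}(\aaa \cup \xx) = s_{\mathrm{AZ}}(\aaa) \cdot s_{\HH}(\xx), \qquad s_{\HH'}(\mathbf{1} \cup \yy) = s_{\HH}(\yy),$$
which is a combinatorial check: the orderings of the $\alpha$-arcs on the two sides of $\bdy\HH$ combine into the single ordering on $\HH'$, and the shuffle sign required to interleave the two sets of occupied arcs is precisely $\mathrm{sign}(\sigma_{o(\xx)})$ --- this is the role of the extra permutation factor in the definition of $s(\xx)$ that is absent in the AZ-case. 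The local intersection signs $s(x_i)$ match trivially since $\bdy\HH$ contains no $\alpha \cap \beta$ intersections.

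The main technical obstacle I anticipate is this sign factorization: one must verify that the permutations $\sigma_{o(\cdot)}$ and $\sigma_{\cdot}$ associated to $\mathrm{AZ}(\zz)$ and $\HH$ combine correctly into the analogous permutations on $\HH'$. Following the outline of \cite[Lemma 19]{dec}, I would verify the factorization first on a single Reeb chord (or on an elementary strand-diagram generator), and then extend to general $\aaa$ and $\brho$ by multiplicativity of the algebra grading and additivity of the Heegaard Floer differential.
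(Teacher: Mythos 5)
Your plan is essentially sound, but note that the paper itself does not reprove this proposition: its entire proof is a citation of \cite[Lemma 19 and Proposition 20]{dec} for the $\alpha$-bordered case, with the remark that the $\beta$-bordered case is verbatim. What you have written is a reconstruction of that cited argument, and it closely anticipates the machinery the paper only develops later, in Section \ref{inv}, for Proposition \ref{indgr}: the $\mathrm{AZ}(\zz)$-gluing, the index additivity $\ind(B_A\natural B_D)=\ind(B_A,\brho)+\ind(B_D,\vec\rho)-1$ of Lemma \ref{ind}, the index-one half-strip computation of Lemma \ref{indaz}, and the sign factorization of Proposition \ref{grpairing}. Three points deserve care. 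First, your division of labor does not match the proposition's: statement (1) concerns the module structure on $\cala(-\bdy\HH)\otimes_{\mathcal I}X(\HH)$ and is essentially definitional once $m(I(\sss))=0$, whereas statement (2) concerns the full differential $\delta(\xx)=\sum a_{\xx,\yy}\otimes\yy$, whose terms include non-provincial domains decorated by algebra elements; so the gluing argument you file under (1) is really the non-provincial half of (2), and your treatment of (2) alone (provincial domains only) would leave a gap. Second, the glued index-one domain runs from $I_A(\xx)\boxtimes\xx$ to $\bigl(I_A(\xx)a(\brho)\bigr)\boxtimes\yy$ --- the idempotent generator of $\mathrm{AZ}(\zz)$ sits over the source and the algebra element over the target --- not from $\aaa\cup\xx$ to $\mathbf 1\cup\yy$; this reversal is harmless mod $2$ but matters for the existence of the composite generators. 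Third, ``extend by multiplicativity'' hides the one genuinely delicate step: for a sequence $\vec\rho$ the correction term $\iota(\vec\rho)$ contains linking-number cross-terms, and one needs $\iota(\brho)=-|\vec\rho|-\iota(\vec\rho)$, coming from $L_A=-L_D$ under orientation reversal of the boundary, to get the index additivity; and since $\mathrm{AZ}(\zz)\cup\HH$ is not closed, one must also observe that the relevant domains avoid the remaining $\beta$-boundary before invoking the closed-diagram relation between intersection signs and index.
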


\begin{proof}
 For  $\alpha$-bordered Heegaard diagrams, this was proven in \cite[Lemma 19 and Proposition 20]{dec}. The proof for $\beta$-bordered Heegaard diagrams follows verbatim. 
\end{proof}

 Note that the relative grading is well defined. Ordering and orienting the arcs is uniquely specified by the pointed matched circle, so a $\Z/2$ grading is induced by a choice of ordering and orienting the $\alpha$ and $\beta$ circles, and corresponds to a choice of orientation for $\Lambda^{g-k}(A)\otimes\Lambda^g(B)$ in the case of $\alpha$-bordered diagrams, or $\Lambda^g(A)\otimes\Lambda^{g-k}(B)$ in the case of $\beta$-bordered diagrams . Here $A$ is the subspace of $H_1(\Sigma; \R)$ spanned by $\balpha$, and $B$ is the subspace of $H_1(\Sigma; \R)$ spanned by the set $\bbeta$, as discussed in Section \ref{closedgr} for the closed case. This corresponds to a choice of orientation on $H_{\ast}(Y; \mathbb R)$, as in \cite[Section 2.4]{dsfh}.

\begin{figure}[h]
 \centering
       \labellist
       \pinlabel $z$ at 420 536
       \pinlabel \textcolor{red}{$\alpha_1^-$} at 50 314
       \pinlabel \textcolor{red}{$\alpha_2^-$} at 50 380
       \pinlabel \textcolor{red}{$\alpha_3^-$} at 50 440
       \pinlabel \textcolor{red}{$\alpha_4^-$} at 50 500
       \pinlabel \textcolor{red}{$\alpha_1^+$} at 50 560
       \pinlabel \textcolor{red}{$\alpha_2^+$} at 50 610
       \pinlabel \textcolor{red}{$\alpha_3^+$} at 50 680
       \pinlabel \textcolor{red}{$\alpha_4^+$} at 50 736
       \pinlabel \textcolor{blue}{$\beta_1$} at 90 352  
       \pinlabel \textcolor{blue}{$\beta_2$} at 156 414
       \pinlabel \textcolor{blue}{$\beta_3$} at 220 470
       \pinlabel \textcolor{blue}{$\beta_4$} at 300 540
       \pinlabel \textcolor{red}{$\alpha^c_2$} at 250 340 
       \pinlabel \textcolor{red}{$\alpha^c_1$} at 380 470  
       \pinlabel $A$ at 119 318  
       \pinlabel \reflectbox{\rotatebox{180}{$A$}} at 120 550  
       \pinlabel $B$ at 186 376  
       \pinlabel \reflectbox{\rotatebox{180}{$B$}} at 186 610  
       \pinlabel $C$ at 254 434  
       \pinlabel \reflectbox{\rotatebox{180}{$C$}} at 254 670  
       \pinlabel $D$ at 320 494  
       \pinlabel \reflectbox{\rotatebox{180}{$D$}} at 320 728  
       \pinlabel $p_1$ at 82 304  
       \pinlabel $p_2$ at 146 362
       \pinlabel $p_3$ at 212 422
       \pinlabel $p_4$ at 280 480
       \pinlabel $r_1$ at 162 304  
       \pinlabel $r_2$ at 230 380
       \pinlabel $q_1$ at 296 422
       \pinlabel $q_2$ at 362 500
       \endlabellist
       \includegraphics[scale=.5]{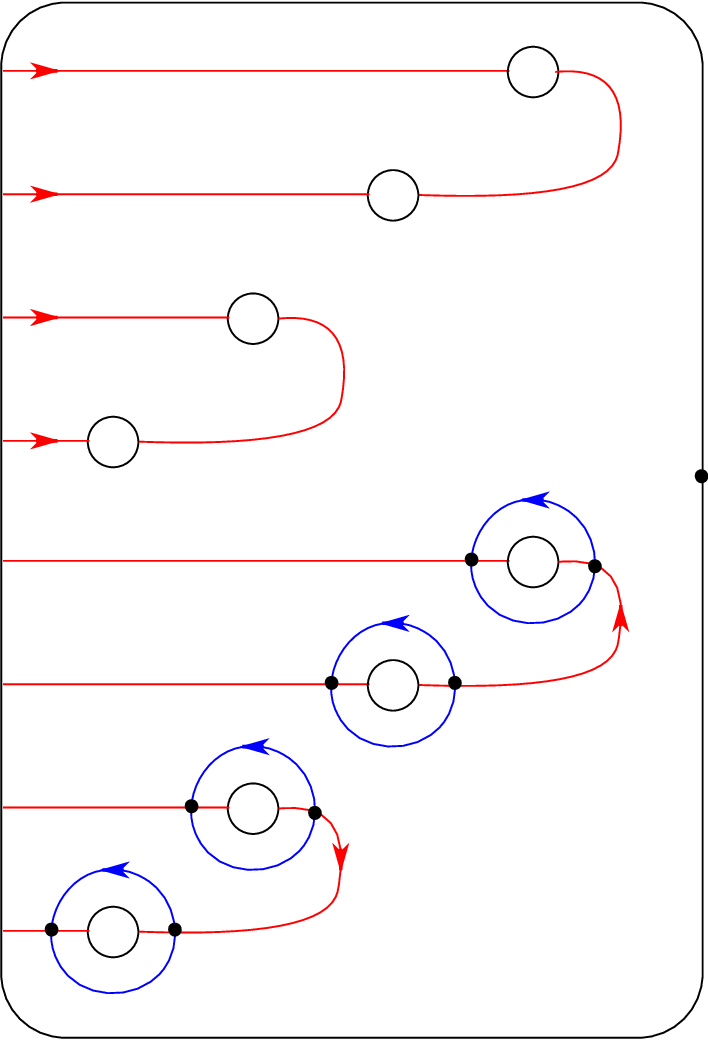} 
       \vskip .2 cm
       \caption{A Heegaard diagram $\HH$ of genus $4$ for a bordered handlebody of genus $2$. The boundary $\bdy\HH$ is the antipodal pointed matched circle.}
       \label{cfdgr}
\end{figure}

\example  Figure \ref{cfdgr} is a Heegaard diagram for a genus $2$ handlebody. The ordering and orientation on the $\alpha$ arcs  is dictated by $\bdy\HH$, and the ordering and orientation on the $\alpha$ and $\beta$ circles was picked arbitrarily. The four generators of the Heegaard diagram, as ordered quadruples according to the order of the $\alpha$-curves, are 
 \begin{align*}
 \xxx &= (p_1,p_3,q_2,r_2),\\
 \yyy &= (p_1, p_4, q_1, r_2),\\
 \zzz &= (p_2, p_3, q_2, r_1),\\ 
 \www &= (p_2, p_4, q_1, r_1).
 \end{align*}
  Since $\xxx$ occupies the arcs $\alpha_1$ and $\alpha_3$,  we have $o(\xxx) = \{1,3\}$, and $\sigma_{o(\xxx)} = \bigl(\begin{smallmatrix}
  1 & 2 & 3 & 4 \\
  1 & 3 & 2 & 4
\end{smallmatrix}\bigr)$. Since $p_1\in \alpha_1\cap \beta_1$, $p_3\in \alpha_3\cap \beta_3$, $q_2\in \alpha^c_1\cap \beta_4$, and $r_2\in \alpha^c_2\cap \beta_2$, we have  $\sigma_{\xxx} = \bigl(\begin{smallmatrix}
  1 & 2 & 3 & 4 \\
  1 & 3 & 4 & 2
\end{smallmatrix}\bigr)$. Then 
$$s(\xxx) = \textrm{sign}(\sigma_{o(\xxx)})\textrm{sign}(\sigma_{\xxx}) s(p_1)s(p_3)s(q_2)s(r_2) = (-1)\cdot 1 \cdot 1\cdot 1\cdot (-1)\cdot 1 = 1.$$
One can compute the signs of the remaining generators similarly. We list the complete data for all four generators below, using the notation ${\bf g}= (g_1, g_2, g_3, g_4)$ that follows the order of the $\alpha$-curves. 
\vspace{.5cm}

\begin{center}
  \begin{tabular}{| c || c | r | c | c | c | c | c |}
    \hline
${\bf g}$
    & $o({\bf g})$ & $\sigma_{o({\bf g})}$ & $\sigma_{\bf g}$ & $\mathrm{sign}(\sigma_{o({\bf g})})$ & $\mathrm{sign}(\sigma_{\bf g})$ & $s(g_1)s(g_2)s(g_3)s(g_4)$ & s({\bf g})\\ \hline \hline
     $\xxx$ & $\{1,3\}$ & $\bigl(\begin{smallmatrix}
  1 & 2 & 3 & 4 \\
  1 & 3 & 2 & 4
\end{smallmatrix}\bigr)$ & $\bigl(\begin{smallmatrix}
  1 & 2 & 3 & 4  \\
  1 & 3 & 4 & 2 
\end{smallmatrix}\bigr)$ & $-1$ & $1$ & $1\cdot 1\cdot (-1)\cdot 1$  & $1$ \\ \hline
    $\yyy$ &  $\{1,4\}$ & $\bigl(\begin{smallmatrix}
  1 & 2 & 3 & 4 \\
  1 & 4 & 2 & 3
\end{smallmatrix}\bigr)$ & $\bigl(\begin{smallmatrix}
  1 & 2 & 3 & 4 \\
  1 & 4 & 3 & 2
\end{smallmatrix}\bigr)$ & $1$ & $-1$ & $1\cdot 1\cdot 1\cdot 1$ & $-1$\\ \hline
    $\zzz$ &  $\{2,3\}$ & $\bigl(\begin{smallmatrix}
  1 & 2 & 3 & 4 \\
  2 & 3 & 1 & 4
\end{smallmatrix}\bigr)$& $\bigl(\begin{smallmatrix}
  1 & 2 & 3 & 4 \\
  2 & 3 & 4 & 1
\end{smallmatrix}\bigr)$ & $1$ & $-1$ &  $1\cdot 1\cdot (-1)\cdot (-1)$  & $-1$ \\ \hline
   $\www$ &  $\{2,4\}$ & $\bigl(\begin{smallmatrix}
  1 & 2 & 3 & 4 \\
  2 & 4 & 1 & 3
\end{smallmatrix}\bigr)$ & $\bigl(\begin{smallmatrix}
  1 & 2 & 3 & 4 \\
  2 & 4 & 3 & 1
\end{smallmatrix}\bigr)$ & $-1$ & $1$  & $1\cdot 1\cdot 1\cdot (-1)$   & $1$ \\
    \hline
  \end{tabular}
\end{center}

\subsection{Type $A$ structures}\label{agr}
A relative grading on  $\cfahat$ can be defined analogously. However, we  modify the definition for $\cfdhat$ slightly,  so that after gluing two bordered Heegaard diagrams, the relative grading $m(\xxx\otimes\yyy)$ for a generator of the resulting closed diagram can be computed  in the simplest way, as the sum $m(\xxx)+ m(\yyy)$.

Given an $\alpha$-bordered Heegaard diagram $\hha$ with $\bdy\hha = \zz$, order  the $\alpha$-arcs as for $\mathrm{AZ}(\zz)$. Also order and orient the $\alpha$ and $\beta$ circles, and define a complete ordering on all $\alpha$-curves by $\alpha^c_1, \ldots, \alpha^c_{g-k}, \alpha_1, \ldots, \alpha_{2k}$. Write generators as ordered tuples $\xxx = (x_1, \ldots, x_g)$ to agree with the ordering of the occupied $\alpha$-curves, and for any generator $\xxx$,  define $\sigma_{\xxx}$ to be the permutation for which 
\begin{align*}
x_1 &\in \alpha^c_{1}\cap \beta_{\sigma_{\xxx}(1)}\\
& \hspace{7pt} \vdots\\
x_{g-k} &\in \alpha^c_{g-k}\cap \beta_{\sigma_{\xxx}(g-k)}\\
x_{g-k+1} &\in \alpha_{i_1}\cap \beta_{\sigma_{\xxx}(g-k+1)}\\
&\hspace{7pt}  \vdots\\
 x_g &\in \alpha_{i_k}\cap \beta_{\sigma_{\xxx}(g)},
\end{align*}
where $(i_1, \ldots, i_k) = J(o(\xxx))$.
For any $x_i$, define $s(x_i)$ to be the intersection sign of $\balpha$ and $\bbeta$ at $x_i$. Last, define the sign of $\xxx$ by 
$$s(\xxx) = \textrm{sign}(\sigma_{\xxx})\prod_{i=1}^g s(x_i).$$

Similarly, one can define a type $A$ grading for $\beta$-bordered diagrams, This is analogous to the type $D$ case, but again ordering the $\alpha$-arcs after the $\alpha$-cirles. 

This sign function induces a grading on $\cfahat(\HH)$ that is compatible with the $\ainf$ operations. The proof of this is similar to the proof for $\cfdhat$.  We choose not to include it in this paper, and phrase all our results in terms of $\cfdhat$.  

\subsection{Pairing}
We show how to relate the relative grading for bordered Floer homology to  the relative grading for Heegaard Floer homology. 

\begin{proposition}\label{grpairing}
Suppose $\HH$ and $\HH'$ are bordered Heegaard diagrams of genus $g$ and $g'$ respectively, such that $\bdy\HH = \zz = -\bdy \HH'$.  Suppose $\xxx$ and $\yyy$ are generators of $\HH$, and $\xxx'$ and $\yyy'$ are generators of $\HH'$, so that $\xxx\boxtimes \xxx'$ and $\yyy\boxtimes \yyy'$ are generators of $\HH \cup \HH'$. The relative grading on 
$$\cfahat(\HH_1)\boxtimes\cfdhat(\HH_2)\cong \cfhat(\HH_1\cup \HH_2)$$
can be computed from the relative grading on bordered Floer homology by 
$$m(\xxx\boxtimes \xxx') - m(\yyy\boxtimes \yyy')  = [m(\xxx) + m(\xxx')] -[m(\yyy) + m(\yyy')].$$
\end{proposition}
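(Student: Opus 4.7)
The plan is to fix compatible orderings of alpha and beta curves in the closed diagram $\HH \cup \HH'$ and to compute the sign $s(\xxx \boxtimes \xxx')$ directly in terms of $s(\xxx)$ and $s(\xxx')$. In the closed diagram, the alpha curves decompose into the $g-k$ alpha circles of $\HH$, the $2k$ arc-circles $\alpha_1,\ldots,\alpha_{2k}$ formed by pairing matched arcs, and the $g'-k$ alpha circles of $\HH'$; I would order them in that order, orient each arc-circle by concatenating the orientations of its two constituent arcs (which match up at the glued endpoints by the conventions of Sections \ref{dgr} and \ref{agr}), and order the beta circles as those of $\HH$ followed by those of $\HH'$. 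Because every alpha and beta curve of the closed picture is obtained by gluing the oriented curves and arcs of the bordered ones, each local intersection sign $s(z)$ in the closed diagram equals its bordered counterpart, so
$$\prod_{z \in \xxx \boxtimes \xxx'} s(z) = \prod_i s(x_i) \cdot \prod_j s(x'_j).$$

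Next I would compare $\sigma_{\xxx \boxtimes \xxx'}$ with $\sigma_\xxx$ and $\sigma_{\xxx'}$. In the alternative ``concatenation'' ordering of alpha curves --- $\alpha$-circles of $\HH$, then the arcs of $\xxx$ in the order $J(o(\xxx))$, then the arcs of $\xxx'$ in the order $J(o(\xxx'))$, then $\alpha$-circles of $\HH'$ --- the assignment of beta indices factors as $\sigma_\xxx \sqcup \tilde\sigma_{\xxx'}$, where $\tilde\sigma_{\xxx'}$ is $\sigma_{\xxx'}$ shifted by $g$ to account for the closed-diagram relabeling of the beta circles of $\HH'$; its sign is $\mathrm{sign}(\sigma_\xxx)\mathrm{sign}(\sigma_{\xxx'})$. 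Passing from the concatenation ordering to the closed-diagram ordering only reorders the middle $2k$ arc positions from $(J(o(\xxx)),J(o(\xxx')))$ to $(1,2,\ldots,2k)$; by the very definition of $\sigma_\sss$, the permutation carrying $(1,\ldots,2k)$ to $(J(o(\xxx)),J(o(\xxx')))$ is $\sigma_{o(\xxx)}$, so this reshuffling contributes the sign $\mathrm{sign}(\sigma_{o(\xxx)})$. Combining this with the type $A$ and type $D$ sign formulas gives
$$s(\xxx\boxtimes\xxx') = \mathrm{sign}(\sigma_\xxx)\mathrm{sign}(\sigma_{\xxx'})\mathrm{sign}(\sigma_{o(\xxx)}) \prod s(x_i)\prod s(x'_j) = s(\xxx)\,s(\xxx')\cdot\frac{\mathrm{sign}(\sigma_{o(\xxx)})}{\mathrm{sign}(\sigma_{o(\xxx')})}.$$

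Finally, since $o(\xxx)$ and $o(\xxx')$ partition $[2k]$, the permutations $\sigma_{o(\xxx)}$ and $\sigma_{o(\xxx')}$ differ by the block-swap of $[2k]$ that interchanges the first $k$ entries with the last $k$; this is a product of $k$ disjoint transpositions and so has sign $(-1)^k$ --- a constant depending only on $\zz$, not on the generator. Translating to $\Z/2$ gradings, $m(\xxx\boxtimes\xxx') \equiv m(\xxx) + m(\xxx') + k \pmod 2$, and subtracting the analogous identity for $(\yyy,\yyy')$ yields the claimed equality. The only real hurdle is the bookkeeping across three different orderings of alpha curves; the ``extra'' $\mathrm{sign}(\sigma_{o(\xxx')})$ that appears in the type $D$ convention of Section \ref{dgr} but not in the type $A$ convention of Section \ref{agr} was engineered precisely so that it cancels with the arc-reshuffling in the closed diagram, leaving behind only the generator-independent factor $(-1)^k$.
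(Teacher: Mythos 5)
Your proof follows the same route as the paper's: fix a concatenated ordering and orientation of the curves on the closed diagram, observe that local intersection signs are preserved under gluing, and compare $\sigma_{\xxx\boxtimes\xxx'}$ with $\sigma_{\xxx}$ and $\sigma_{\xxx'}$ by passing through an intermediate ``concatenation'' ordering of the $\alpha$-curves. Since the statement only concerns relative gradings, your generator-independent constant $(-1)^k$ is harmless, and the proposition follows as you say.

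However, your intermediate sign identity differs from the paper's by exactly that $(-1)^k$: the paper asserts $\mathrm{sign}(\sigma_{\xxx\boxtimes\xxx'}) = \mathrm{sign}(\sigma_{\xxx})\,\mathrm{sign}(\sigma_{o(\xxx')})\,\mathrm{sign}(\sigma_{\xxx'})$ --- with $\sigma_{o(\xxx')}$, not $\sigma_{o(\xxx)}$ --- so that the factor $\mathrm{sign}(\sigma_{o(\xxx')})$ built into the type $D$ sign cancels on the nose and $s(\xxx\boxtimes\xxx') = s(\xxx)s(\xxx')$ exactly, with no residual constant. The source of the discrepancy is your assumption that arc $i$ of $\HH$ glues to arc $i$ of $\HH'$, so that $o(\xxx)$ and $o(\xxx')$ are complementary subsets of $[2k]$ and the arc reshuffling is $\sigma_{o(\xxx)}$. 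But the arcs of $\HH'$ are labeled according to the orientation of $-\zz = \bdy\HH'$, which reverses the ordering of the endpoints $\balpha\cap\bdy\HH'$; for the torus, for instance, the type $D$ arc $\alpha'_1$ is geometrically the type $A$ arc $\alpha_2$, so $o(\xxx)$ and $o(\xxx')$ are not set-theoretic complements in $[2k]$. Tracking this relabeling through the middle block of $2k$ arc-circles replaces your $\mathrm{sign}(\sigma_{o(\xxx)})$ by $\mathrm{sign}(\sigma_{o(\xxx')})$ (these differ by the sign $(-1)^{k^2}=(-1)^k$ of the block swap) and kills the constant. This is worth getting right, because the exact, not merely relative, identity $s(\xxx\boxtimes\xxx') = s(\xxx)s(\xxx')$ is what Section \ref{abssec} relies on when it defines the absolute grading by declaring $m(\xxx\otimes\yyy) := m(\xxx)+m(\yyy)$ and matching it to the closed-diagram grading.
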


\begin{proof}
Let $\alpha^c_1, \ldots, \alpha^c_{g-k}, \alpha_1, \ldots, \alpha_{2k}, \beta_1, \ldots, \beta_g$ be an orientation and ordering on the curves of $\HH$, according to the type $A$ conventions we developed in this section, and let $\alpha_1', \ldots, \alpha_{2k}', {\alpha'}^c_1, \ldots, {\alpha'}^c_{g'-k}, \beta_1', \ldots, \beta_{g'}'$ be an orientation and ordering on the curves on $\HH'$, according to the type $D$ conventions. The orientations on the pairs of arcs $\alpha_i$ and $\alpha'_i$ are compatible, and induce orientations on the closed circles $\widetilde\alpha_i = \alpha_i\cup \alpha_i'$ in $\HH\cup\HH'$. Taking the chosen orientations and concatenating the ordering, we get a total orientation and ordering on the curves in the closed diagram
$$\omega = (\alpha^c_1, \ldots, \alpha^c_{g-k}, \widetilde\alpha_1, \ldots, \widetilde\alpha_{2k},  {\alpha'}^c_1, \ldots, {\alpha'}^c_{g'-k},  \beta_1, \ldots, \beta_g, \beta_1', \ldots, \beta_{g'}').$$
Given generators $\xxx\in \mathfrak S(\HH)$ and $\xxx'\in \mathfrak S(\HH')$, one can verify that 
$$\mathrm{sign}(\sigma_{\xxx\boxtimes\xxx'}) = \mathrm{sign}(\sigma_{\xxx})\mathrm{sign}(\sigma_{o(\xxx')}) \mathrm{sign}(\sigma_{\xxx'}),$$ so 
the product of the signs is
\begin{align*}
s(\xxx)s(\xxx') &=  \textrm{sign}(\sigma_{\xxx})\prod_{i=1}^g s(x_i) \textrm{sign}(\sigma_{o(\xxx')})\textrm{sign}(\sigma_{\xxx'})\prod_{i=1}^{g'} s(x_i) \\
&= \mathrm{sign}(\sigma_{\xxx\boxtimes\xxx'})\left(\prod_{i=1}^g s(x_i)\prod_{i=1}^{g'} s(x_i)\right)\\
&= s(\xxx\boxtimes\xxx').
\end{align*}
All the  $\Z/2$ gradings discussed here are defined by $s = (-1)^m$, so  multiplicativity of signs is equivalent to additivity of gradings. The statement for relative gradings follows.
\end{proof}

\remark  The construction  in this section can easily be extended to the various bordered bimodules from \cite{bimod}, and to the bordered sutured structures developed by  Zarev in \cite{bs}.

\remark To conclude this section, we explain how to relate our $\Z/2$ grading  to the grading from \cite[Section 3]{dec}. 
Recall $I(\sss)$ is in $I(\zz,0) = \mathcal I(\zz)\cap \cala(4k, k)$ whenever $|\sss|=k$. Given such $\sss$, look at $J(\sss) = (s_1, \ldots, s_k)$,  let $\rho_i^{\sss}$ be the Reeb chord from $\alpha_i^-$ to $\alpha_{s_i}^-$ whenever $i\neq s_i$,  and let $\brho^{\sss}$ be the set of all such Reeb chords. Choose grading refinement data by choosing the base idempotent $\sss_0:=\{1, \ldots, k\}$  and defining $\psi(\sss):= gr'(a(\brho^{\sss})) = (\iota(a(\brho^{\sss})); [\brho^{\sss}])$ for every other $\sss$. This specifies a refined grading $gr$ on $\cala(\zz, 0)$. One can verify that the resulting $\Z/2$ grading $m = f\circ gr$ obtained by composing with the map $f$ from \cite[Section 3]{dec} agrees with $s$.  In other words, given $\aaa\in \cala(\zz, 0)$, then $s(\aaa) = (-1)^{m(\aaa)}$, and, for an appropriate choice of a base generator for $\cfdhat(\hha)$ in each $\mathrm{spin}^c$ structure, $ s(\xxx)= (-1)^{m(\xxx)}$.

\section{From relative to absolute gradings}\label{abssec}

For a closed $3$-manifold $Y$, the Poincar\'e duality for homology, specifically the isomorphism between $H_1(Y; \R)$ and $H_2(Y; \R)$,  specifies a canonical orientation on the vector space $H_1(Y; \R)\oplus H_2(Y; \R)$, and hence a canonical $\Z/2$ grading on $\hfhat(Y)$, see Section \ref{closedgr}  and \cite[Section 2.4]{dsfh}.  Alternatively, Ozsv\'ath and Szab\'o define an absolute $\Z/2$ grading by looking at a map from the twisted Heegaard Floer homology $\underline{\hf}^{\infty}(Y)$ to $\hfhat(Y)$, see \cite[Section 10.4]{osz14}. The two gradings from \cite{dsfh} and \cite{osz14} differ by $(-1)^{b_1(Y)}$ \cite[Remark 2.10]{dsfh}, and the grading in Section \ref{closedgr} was defined to agree with the one from \cite{osz14}.
However, when $Y$ has boundary of genus $\geq 1$, the spaces $H_1(Y, \bdy Y; \R)$ and $H_2(Y, \bdy Y; \R)$ are not isomorphic, and there is  no developed bordered analogue of $\underline{\hf}^{\infty}$ either,  and thus there is no analogous way to choose a canonical grading for a manifold with boundary. 

 However, the additional parametrization information for the boundary still allows one to define an absolute grading when we have a \emph{bordered} $3$-manifold $Y = (Y, -\zz, \phi)$, by choosing a special element of $H_1(F(\zz); \R)$. We provide our construction in the remaining part of this section.

Given $\zz$, recall the ordering on the $\alpha$-arcs for $AZ(\zz)$ from Section \ref{rel}. Let $[\alpha_i]$ denote the generator for $H_1(F(\zz); \Z)$ corresponding to the $1$-handle attached to $\alpha_i$ in the construction of $F(\zz)$. We fix the convention  that  the core of the $1$-handle is oriented from $\alpha_i^-$ to $\alpha_i^+$, and closed off inside the $0$-handle for $F(\zz)$ to obtain an oriented circle. Then $[\alpha_i]$ is the homology class of this circle.

Identify $H_1(F(\zz); \Z)\cong \Z\left<[\alpha_1] , \ldots, [\alpha_{2k}] \right>$ with $\Z^{2k}$
 via $[\alpha_i]\mapsto e_i$, where $e_1, \ldots, e_{2k}$ is the standard basis. Since $[\alpha_1], \ldots, [\alpha_{2k}]$ also generate $H_1(F(\zz); \R)$, we can identify $H_1(F(\zz); \R)$ with $\R^{2k}$ in the same way, and view $H_1(F(\zz); \Z)$ as the integer lattice in $H_1(F(\zz); \R)$ under this identification.
Observe that  the real homology $H_1(F(\zz); \R)$ with its intersection form is a symplectic vector space. We define an ordering on the subsets of $H_1(F(\zz); \Z)$ of size $k$ that span Lagrangian subspaces of  $H_1(F(\zz); \R)$. Define a total ordering on $H_1(F(\zz); \Z)$ by 
 \begin{displaymath}
v<^{\ast} w \textrm{ iff } \left\{ \begin{array}{ll}
 |v|<|w|& \textrm{or}\\
|v| = |w| \textrm{ and } v <_{lex} w,
\end{array} \right.
\end{displaymath}
where $|\cdot |$ is the standard norm on $\Z^{2k}\subset \R^{2k}$, and $<_{lex}$ is the lexicographical ordering on $\Z^{2k}$ with respect to the standard basis. In other words, vectors in $H_1(F(\zz); \Z)\cong \Z^{2k}$ are ordered first by their length,  and within given length, the finite number of vectors of this length are ordered as words in the alphabet $\Z$, where letters are ordered  according to their ordering as integers.

This ordering $<^{\ast}$ induces a lexicographical ordering $<^{\ast}_{\mathrm{lex}}$ on the set $S$ of ordered subsets of $H_1(F(\zz); \Z)$ of size $k$. Note that the set $(H_1(F(\zz); \Z),<^{\ast})$, which acts as the ``alphabet" for $S$,  is isomorphic to $(\mathbb N, <)$ as an ordered set, so it is well-ordered. The set $(S, <^{\ast}_{\mathrm{lex}})$ then is isomorphic to $\mathbb N^k$ with the lexicographical ordering induced by $<$, so $(S, <^{\ast}_{\mathrm{lex}})$ is well-ordered as well.  

Define the subset 
$$L:=\{l\in S| l \textrm{ spans a Lagrangian subspace of } H_1(F(\zz); \R)\}.$$
We say that $l \in L$ is \emph{embeddable}, if $l$ can be represented by $k$ disjoint, embedded, oriented circles $c_1, \ldots, c_k$ on $F$, i.e. if we can find such embedded circles, so that $l = ([c_1], \ldots, [c_k])$. Note that not every element in $L$ is embeddable. For example, it is easy to see that for any $l\in L$, $2l$ is not embeddable. We define
$$L^{\mathrm{emb}} :=\{l\in L | l \textrm{ is embeddable}\}.$$
\begin{claim}
The set $L^{\mathrm{emb}}$ is non-empty.
\end{claim}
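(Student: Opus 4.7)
The plan is to produce an explicit element of $L^{\mathrm{emb}}$ using the fact that $F(\zz)$ is a closed oriented surface of genus $k$, so it bounds a $3$-dimensional handlebody. First I would fix any handlebody $V$ with $\bdy V = F(\zz)$. Inside $V$ one can choose $k$ pairwise disjoint, properly embedded compressing disks $D_1, \ldots, D_k$, whose boundaries $c_i := \bdy D_i$ are pairwise disjoint embedded circles on $F(\zz)$; giving each $c_i$ an orientation, I set $l = ([c_1], \ldots, [c_k])$ and claim that $l \in L^{\mathrm{emb}}$.

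By construction $l$ is embeddable, so the only thing to verify is that the span of $\{[c_1], \ldots, [c_k]\}$ in $H_1(F(\zz); \R)$ is Lagrangian. This is a standard fact: the inclusion-induced map $i_\ast \co H_1(F(\zz); \R) \to H_1(V; \R)$ has kernel a Lagrangian subspace of the symplectic vector space $H_1(F(\zz); \R)$, and this kernel is freely generated by the classes $[c_i]$ of the meridional compressing disks. The Lagrangian property of $\ker(i_\ast)$ follows from half-lives-half-dies (the kernel has dimension $k$, exactly half of $\dim H_1(F(\zz);\R) = 2k$) together with the vanishing of the intersection form on $\ker(i_\ast)$, which in turn follows because two disjoint embedded curves on $F(\zz)$ bounding disks in $V$ have zero algebraic intersection. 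Hence $l$ spans a Lagrangian subspace and lies in $L^{\mathrm{emb}}$.

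No step is really an obstacle here; the only thing to be slightly careful about is that we truly need the span to be Lagrangian and not merely isotropic, but this is automatic from the dimension count. This also shows that $L^{\mathrm{emb}}$ is in fact quite large: any handlebody filling of $F(\zz)$ produces such an $l$, and different choices of compressing systems give different elements of $L^{\mathrm{emb}}$.
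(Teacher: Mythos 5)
Your proof is correct and is essentially the paper's argument: both exhibit $k$ pairwise disjoint embedded circles whose homology classes span a half-dimensional isotropic (hence Lagrangian) subspace of $H_1(F(\zz);\R)$, with isotropy coming from disjointness. The paper reaches this slightly more directly --- it simply picks $k$ pairwise disjoint, homologically independent curves on the genus-$k$ surface and notes that disjointness kills the intersection form while independence gives dimension $k$ --- whereas you route the same curves (a compressing-disk system is exactly such a collection) through a handlebody filling and half-lives-half-dies, which is a correct but unnecessary detour.
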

\begin{proof}
One can always find a set of $k$ pairwise disjoint, homologically independent circles on a closed surface of genus $k$, so let $c_1, \ldots, c_k$ be $k$ pairwise disjoint, oriented,  homologically independent circles on $F$. Then each curve $c_i$ specifies a class $[c_i]\in H_1(F(\zz); \Z)$, and since the curves are pairwise disjoint, the homological intersection numbers $[c_i]\cdot [c_j]$ are all zero, so $[c_1], \ldots, [c_k]$ span a Lagrangian subspace in $H_1(F(\zz); \R)$. Thus, 
 $([c_1], \ldots, [c_k])\in L^{\mathrm{emb}}$.
\end{proof}
Since $(S, <^{\ast}_{\mathrm{lex}})$ is well-ordered, and the subset $L^{\mathrm{emb}}\subset S$ is non-empty, it follows that $L^{\mathrm{emb}}$, with the ordering induced from $(S, <^{\ast}_{\mathrm{lex}})$,
 has a (unique) smallest element. Let $l_{\zz} = (l_1, \ldots, l_k)$ be the smallest element in $L^{\mathrm{emb}}$.

We now construct a diagram $\hz$, given a pointed matched circle $\zz = (Z, {\bf a}, M, z)$. We start with $(Z\setminus \textrm{nbd}(z), {\bf a})\times I$, and attach $2$-dimensional $1$-handles to the $0$-spheres $M^{-1}(i)\times \{0\}$ to obtain a compact surface $\Sigma$ of genus $2k$ with one boundary component and $2k$ $\alpha$-arcs. Note that the boundary of the resulting Heegaard diagram is $\zz$, and order and orient the $\alpha$-arcs according to the convention for $\mathrm{AZ}(\zz)$ from Section \ref{rel}. Reinsert the basepoint $z$ in the region of $\bdy \Sigma\setminus {\bf a}$ containing $\bdy Z\times I$. Let $\gamma_i$ be the closure of $\alpha_i$ along $(Z\setminus \textrm{nbd}(z), {\bf a})\times \{1\}$ to a circle, oriented compatibly with $\alpha_i$. Add $k$ pairwise disjoint, oriented $\beta$-circles to represent  $l_{\zz} = (l_1, \ldots, l_k)$, i.e. so that if $l_i = \sum_{j\in [2k]}a_{ij}[\gamma_j]$, then $[\beta_i] =  \sum_{j\in [2k]}a_{ij}[\gamma_j] =l_i$. This is possible, since $l_{\zz}$ is embeddable.
 The resulting diagram $\hz$ specifies a bordered handlebody.

The  ordering and orientation on the $\alpha$ and $\beta$ curves prescribed above induces a $\Z/2$ grading $m$ on the generators of $\hz$, according to the type $A$ conventions from Section \ref{agr}.

Given a bordered Heegaard diagram $\HH$ with boundary $\bdy \HH = -\zz$, we turn the relative grading for $\HH$ from Section \ref{rel} into an absolute grading by requiring that the resulting grading on the generators of  $\hz \cup \HH$ defined by $m(\xxx\otimes \yyy) := m(\xxx)+m(\yyy)$ agrees with the absolute grading from Section \ref{closedgr}.  Note that by Proposition \ref{grpairing}, this ``additive" definition makes sense.

 \begin{proof}[Proof of Theorem \ref{grthm}]
The absolute grading  we just defined agrees with the relative grading from Section \ref{dgr}, and the behavior stated by Equations $(1)$ and $(2)$ of Theorem \ref{grthm}  has been verified in Section \ref{dgr}. It only remains to show that the absolute grading is well defined.

Note that while the Lagrangian $l_{\zz}$ is well-defined, the corresponding Heegaard diagram $\hz$ is not, as we only specified the homology class and orientation of each $\beta$-curve, but not the precise embedding. However,  the homology data that goes into fixing the grading is the same in the following sense. Let   $\hz$ and $\hz'$ be two Heegaard diagrams constructed as above (i.e. two choices of how exactly to place the $\beta$-curves). They  specify  (oriented) bordered handlebodies $H_{\zz}$ and $H_{\zz}'$. 
The corresponding sets of oriented curves $\bbeta = \{\beta_1, \ldots, \beta_k\}$ and $\bbeta' = \{\beta_1', \ldots, \beta_k'\}$ intersect the arcs $\alpha_1, \ldots, \alpha_{2k}$ algebraically in the same way, i.e. $\# (\beta_i\cup \alpha_j) = \# (\beta_i'\cup \alpha_j)$,  since $\bbeta$ and $\bbeta'$ are both  ordered and oriented according to $l_{\zz}$. Also, there are no $\alpha$-circles, and the $\alpha$-arcs are ordered and oriented canonically. Hence, given a bordered Heegaard diagram $\HH$ with boundary $\bdy \HH = -\zz$, an ordering and orientation $\mathfrak o$ on its $\alpha$ and $\beta$ curves, when concatenated with the ordering and orientation $\beta_1, \ldots, \beta_k$ or $\beta_1', \ldots, \beta_k'$, induces the same intersection form for $\hz \cup \HH$ as for $\hz' \cup \HH$. Therefore, $\mathfrak o$ concatenated with  $\beta_1, \ldots, \beta_k$ is compatible with the canonical orientation on $\Lambda^g(A)\otimes\Lambda^g(B)$ from Section \ref{closedgr} if and only if $\mathfrak o$ concatenated with  $\beta_1', \ldots, \beta_k'$ is compatible with the canonical orientation on $\Lambda^g(A')\otimes\Lambda^g(B')$. Here $A$ is the vector space spanned by the $\alpha$-circles on $\hz \cup \HH$, $A'$ is the space spanned by the $\alpha$-circles on $\hz' \cup \HH$, and $B$ and $B'$ are the analogous spaces spanned by the $\beta$-circles. 
Hence, the absolute grading on $\cfdhat(\HH)$ defined in this section does not depend on the choice of $\hz$.
\end{proof}

We work out a complete example below.

\example   Let  $\zz$ be the antipodal matched circle for a surface of genus $2$. Then $H_1(F(\zz); \Z)\cong \Z^4$ has standard basis $e_1, e_2, e_3, e_4$, corresponding to the four $\alpha$-arcs. The ordered set $(H_1(F(\zz); \Z), <^{\ast})$ starts as
\[\vec  0, -e_1, -e_2, -e_3, -e_4, e_4, e_3, e_2, e_1, -e_1-e_2, -e_1-e_3, \ldots \]
Here, the set $S$ consists of ordered pairs of elements in $(H_1(F(\zz); \Z)$. Given  two pairs $(a,b)$ and $(c,d)$ in $S$, 
 \begin{displaymath}
(a,b)<^{\ast}_{\mathrm{lex}} (c,d) \textrm{ iff } \left\{ \begin{array}{ll}
 a<^{\ast}c & \textrm{or}\\
a = c\textrm{ and } b <^{\ast} d.
\end{array} \right.
\end{displaymath}
With this total ordering, the set $S$ begins as
\[(\vec 0, \vec 0), (\vec 0, -e_1),  (\vec 0, -e_2),  (\vec 0, -e_3), (\vec 0, -e_4), (\vec 0, e_4), \ldots\]
 The subset $L\subset S$ consisting of ``Lagrangian bases", with the ordering induced from $<^{\ast}_{\mathrm{lex}}$ begins as
 \[(-e_1, -e_2+e_3),  \ldots\]

Thus, we need to find a Heegaard diagram $\hz$ of genus $2$, with $\bdy\hz = \zz$, $\alpha$-arcs oriented and ordered according to the type $A$ conventions, no $\alpha$-circles, and  oriented circles $\beta_1$ and $\beta_2$ such that $[\beta_1] = -[\gamma_1]$ and $[\beta_2] = -[\gamma_2] + [\gamma_3]$.

 Figure \ref{hz} show one possible diagram $\hz$.

\begin{figure}[h]
 \centering
       \labellist
       \pinlabel $z$ at 130 10
       \pinlabel $x_1$ at 113 100
       \pinlabel $x_2$ at 111 263
       \pinlabel $y_1$ at 145 80
       \pinlabel $y_2$ at 145 120
       \pinlabel $y_3$ at 145 162
       \pinlabel \textcolor{red}{$\alpha_1^-$} at 170 30
       \pinlabel \textcolor{red}{$\alpha_2^-$} at 170 71
       \pinlabel \textcolor{red}{$\alpha_3^-$} at 170 112
       \pinlabel \textcolor{red}{$\alpha_4^-$} at 170 153
       \pinlabel \textcolor{red}{$\alpha_1^+$} at 170 200
       \pinlabel \textcolor{red}{$\alpha_2^+$} at 170 250
       \pinlabel \textcolor{red}{$\alpha_3^+$} at 170 287
       \pinlabel \textcolor{red}{$\alpha_4^+$} at 170 330
       \pinlabel \textcolor{blue}{$\beta_1$} at 142 45
       \pinlabel \textcolor{blue}{$\beta_2$} at 130 240
       \endlabellist
       \includegraphics[scale=.6]{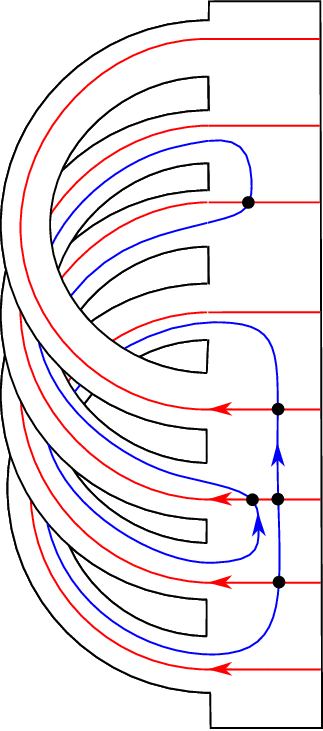} 
       \vskip .2 cm
       \caption{A diagram $\HH_{\zz}$ when $\zz$ is the antipodal pointed matched circle.}
       \label{hz}
\end{figure}

The four generators of $\hz$, as ordered pairs according to the order of the $\alpha$-curves, are $\aaa = (y_1, x_1)$, $\bbb = (x_2, y_2)$, $\ccc = (x_1, y_3)$, and $\ddd = (x_2, y_3)$.  We list the complete data needed for computing their signs below.
\vspace{.5cm}

\begin{center}
  \begin{tabular}{| c || c  | c | c | c | c | c |}
    \hline
generator ${\bf g}$
    & $o({\mathbf{g}})$ &  $\sigma_{\mathbf g}$ &  $\mathrm{sign}(\sigma_{\bf g})$ & $s(g_1)s(g_2)$ & s({\bf g})\\ \hline \hline
     $\aaa$ & $\{2,3\}$ & $\bigl(\begin{smallmatrix}
  1 & 2  \\
  1 & 2 
\end{smallmatrix}\bigr)$ &   $1$ & $(-1)\cdot (-1)$  & $1$ \\ \hline
    $\bbb$ &  $\{2,3\}$ & $\bigl(\begin{smallmatrix}
  1 & 2 \\
 2 & 1
\end{smallmatrix}\bigr)$ & $-1$ & $ 1\cdot (-1)$ & $1$\\ \hline
    $\ccc$ &  $\{3,4\}$ & $\bigl(\begin{smallmatrix}
  1 & 2  \\
  2 & 1
\end{smallmatrix}\bigr)$  & $-1$ &  $ (-1)\cdot (-1)$  & $-1$ \\ \hline
   $\ddd$ &  $\{2,4\}$ & $\bigl(\begin{smallmatrix}
  1 & 2  \\
  2 & 1
\end{smallmatrix}\bigr)$  & $-1$  & $1\cdot (-1)$   & $1$ \\
    \hline
  \end{tabular}
\end{center}
\vspace{.5cm}

We use  $\hz$ to pin down the absolute grading for the Heegaard diagram $\HH$ from Figure \ref{cfdgr}. The generators of the closed Heegaard diagram $\hz\cup \HH$ are $\aaa \boxtimes \yyy$, $\bbb \boxtimes\yyy$, and $\ddd\boxtimes\xx$. The signs induced by the randomly-picked ordering and orientation on the curves for $\HH$ in Figure \ref{cfdgr} are  
\begin{align*}
s(\aaa \boxtimes \yyy) &= s(\aaa)s(\yyy) = -1,\\
s(\bbb \boxtimes\yyy) &=  s(\bbb)s(\yyy) = -1,\\
s(\ddd\boxtimes\xx) &= s(\ddd)s(\xx) = 1.
\end{align*}
In $\Z/2$ notation, the gradings induced by this choice are
\begin{align*}
m(\aaa \boxtimes \yyy) &=  1,\\
m(\bbb \boxtimes\yyy) &=  1,\\
m(\ddd\boxtimes\xx) &=  0.
\end{align*}
We see that the Euler characteristic of $\hfhat$ for the closed manifold $Y$ specified by $\hz\cup \HH$ is non-zero, so $H_1(Y; \Z)$ is finite (in fact, the reader can verify that $Y\cong S^3$). 
In this case, the absolute $\Z/2$ grading  is specified by 
$$\chi(\hfhat(Y)) = |H_1(Y; \Z)|.$$
We need the Euler characteristic to be $1$, and not $-1$, so we need to shift the grading on $\cfdhat(\HH)$ induced by the choices in Figure \ref{cfdgr}.  This could be achieved, for example, by reversing the orientation of $\alpha_2^c$.
\\

We finish this section with a bordered version of Claim \ref{closed_reor}.  For a bordered Heegaard diagram, the ordering and orientation on the arcs is fixed, so one can only make choices regarding the circles.

\begin{claim}\label{bord_reor}
Let $\HH=(\Sigma, \balpha, \bbeta, z)$ be a bordered Heegaard diagram of genus $g$ with $2k$ $\alpha$-arcs. Let $\omega= (\alpha_1^c, \ldots, \alpha_{g-k}^c, \beta_1,\ldots,  \beta_g)$ and
 $\omega' = ({\alpha'}^c_1, \ldots, {\alpha'}^c_{g-k}, \beta_1', \ldots, \beta_g')$ be two choices of ordering and orienting the $\alpha$ and $\beta$ circles. Suppose the choices of how to order the $\alpha$ circles and the $\beta$ circles differ by permutations $\sigma_{\alpha}$ and $\sigma_{\beta}$, respectively. Let $n_{\alpha}$ and $n_{\beta}$ be the number of $\alpha$ circles, respectively $\beta$ circles, that have  opposite orientations in $\omega$ and $\omega'$. If
 $$\mathrm{sign}(\sigma_{\alpha})\mathrm{sign}(\sigma_{\beta})(-1)^{n_{\alpha}}(-1)^{n_{\beta}} = 1,$$
 then the two choices induce the same grading on $\cfdhat(\HH)$. Otherwise, i.e. if the product is $-1$, the two choices induce opposite gradings on $\cfdhat(\HH)$ (one is a shift of the other by $1$).
\end{claim}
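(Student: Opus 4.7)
The plan is to reduce Claim \ref{bord_reor} to Claim \ref{closed_reor} via the gluing construction developed earlier in this section. I would fix a model diagram $\hz$ for the boundary $-\bdy\HH$ together with a choice of ordering and orientation on its curves, and form the closed Heegaard diagram $\hz \cup \HH$ equipped with the concatenated ordering as in Proposition \ref{grpairing}.

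The first step is to observe that changing the ordering/orientation on the circles of $\HH$ from $\omega$ to $\omega'$ only affects the portion of the concatenated ordering coming from $\HH$. Crucially, the $\alpha$-circles of $\HH$ form a contiguous block in the concatenated ordering, as do the $\beta$-circles of $\HH$; meanwhile the $\alpha$-arcs (which glue with those of $\hz$ to form the closed circles $\widetilde\alpha_i$) and all of $\hz$'s curves are unchanged. Thus the change from $\omega$ to $\omega'$ lifts to a modification of the closed ordering consisting of a permutation of the $\alpha$-circles of sign $\mathrm{sign}(\sigma_\alpha)$, a permutation of the $\beta$-circles of sign $\mathrm{sign}(\sigma_\beta)$, and $n_\alpha + n_\beta$ reorientations, with no additional contributions.

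Next, I would apply Claim \ref{closed_reor} to the closed diagram $\hz\cup\HH$. Setting $\epsilon := \mathrm{sign}(\sigma_\alpha)\mathrm{sign}(\sigma_\beta)(-1)^{n_\alpha}(-1)^{n_\beta}$, the induced orientation on $\Lambda^g(A)\otimes\Lambda^g(B)$ is preserved when $\epsilon = 1$ and reversed when $\epsilon = -1$. Consequently, for every generator $\xxx\otimes\yyy$ of $\hz\cup\HH$, the closed intersection sign $s(\xxx\otimes\yyy)$ is multiplied by $\epsilon$ when one switches from $\omega$ to $\omega'$. By the multiplicativity computed in the proof of Proposition \ref{grpairing}, $s(\xxx\otimes\yyy) = s(\xxx)s(\yyy)$; since $\hz$ and its choices are fixed, $s(\yyy)$ does not change, and hence $s(\xxx)$ itself must be multiplied by $\epsilon$. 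Translating back via $s = (-1)^m$ yields precisely the claim: the two gradings on $\cfdhat(\HH)$ coincide when $\epsilon = 1$ and differ by a global shift of $1$ when $\epsilon = -1$.

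The main obstacle I anticipate is purely bookkeeping: carefully verifying that the permutations and reorientations of the circles of $\HH$ lift cleanly to the concatenated ordering on $\hz\cup\HH$, with no spurious signs arising either from the $\alpha$-arcs (whose ordering is canonically fixed by $\zz$) or from the interaction between the $\HH$- and $\hz$-blocks in the total ordering. Once that combinatorial check is in place, the conclusion follows immediately from Claim \ref{closed_reor} together with Proposition \ref{grpairing}, without any further analysis of the sign formula in Section \ref{dgr}.
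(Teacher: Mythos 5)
Your proof is correct and takes essentially the same route as the paper's: concatenate $\omega$ and $\omega'$ with the fixed canonical ordering on $\hz$, note that the permutations extend by the identity over the new curves (and the reorientation counts are unchanged) so the sign product $\mathrm{sign}(\sigma_{\alpha})\mathrm{sign}(\sigma_{\beta})(-1)^{n_{\alpha}}(-1)^{n_{\beta}}$ is preserved under the lift, and then transfer the conclusion of Claim \ref{closed_reor} for the closed diagram back to $\cfdhat(\HH)$ via Proposition \ref{grpairing}. The only cosmetic difference is that you invoke Claim \ref{closed_reor} explicitly where the paper leaves that step implicit.
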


\begin{proof}
By concatenating with the canonical ordering and orientation of the curves on $\hz$,  $\omega$ and $\omega'$ induce two choices, $\widetilde\omega$ and $\widetilde\omega'$ respectively, of ordering and orienting the curves on the closed Heegaard diagram $\hz\cup \HH$. Define $\widetilde\sigma_{\alpha}$, $\widetilde\sigma_{\beta}$, $\widetilde n_{\alpha}$, and $\widetilde n_{\beta}$ for the pair $\widetilde\omega$ and $\widetilde\omega'$, in the same way as $\sigma_{\alpha}$, $\sigma_{\beta}$, $n_{\alpha}$, and $n_{\beta}$ were defined for $\omega$ and $\omega'$.  Clearly, $\widetilde n_{\alpha} =  n_{\alpha}$  and $\widetilde n_{\beta} =  n_{\beta}$. The permutations $\widetilde\sigma_{\alpha}$ and $\widetilde\sigma_{\beta}$ are just $\sigma_{\alpha}$ and $\sigma_{\beta}$ extended over the new curves by the identity, so $\mathrm{sign}(\widetilde\sigma_{\alpha}) = \mathrm{sign}(\sigma_{\alpha})$, and $\mathrm{sign}(\widetilde\sigma_{\beta}) = \mathrm{sign}(\sigma_{\beta})$. Then 
 $$  \mathrm{sign}(\widetilde\sigma_{\alpha})\mathrm{sign}(\widetilde\sigma_{\beta}) (-1)^{\widetilde n_{\alpha}} (-1)^{\widetilde n_{\beta}}= \mathrm{sign}(\sigma_{\alpha})\mathrm{sign}(\sigma_{\beta})(-1)^{n_{\alpha}}(-1)^{n_{\beta}}.$$
By Proposition \ref{grpairing}, since the grading on $\hz$ is fixed,  $\widetilde\omega$ and $\widetilde\omega'$ induce the same grading on $\hz\cup\HH$ if and only if $\omega$ and $\omega'$ induce the same grading on $\HH$. The claim follows.
\end{proof}

 Let $A$ be the $(g-k)$-dimensional subspace of $H_1(\Sigma; \R)$ spanned by the $\alpha$ circles, and let $B$ be the $g$-dimensional subspace of $H_1(\Sigma; \R)$ spanned by the $\beta$ circles. By the anticommutativity of  the wedge product, Claim \ref{bord_reor} is equivalent  to the statement that the absolute grading described in this section  is a preferred orientation on  $\Lambda^{g-k}(A)\otimes\Lambda^g(B)$.

\remark Using the same idea, one can obtain absolute gradings for the modules associated to $\beta$-bordered Heegaard diagrams, for the various bimodules from \cite{bimod}, and for the more general structures from \cite{bs}.

\section{Invariance}\label{inv}
This section is the proof of Theorem \ref{invariance}. 

We begin with the statement that curves of index $n$ shift the grading by $n$. In \cite[Section 6.1]{bfh2},  if $\vec{\rho} = (\rho_1, \ldots, \rho_l)$ a non-empty  sequence of Reeb chords on $-\zz$, then $-\vec{\rho}$ is defined to be the sequence $(-\rho_1, \ldots, -\rho_l)$ of the same chords with reversed orientation, and $a(-\vec{\rho})$ is defined as the product $a(-\rho_1) \cdots a(-\rho_l)\in \az$.

\begin{proposition}\label{indgr}
Let $\HH_D$ be a bordered Heegaard diagram, and let $B_D\in \pi_2(\xx_D, \yy_D)$. If $B_D$ is provincial and $\ind(B_D) = n$, then 
$$m(\xx) = m(\yy) + n.$$
Otherwise, if $\vec{\rho}$ is a non-empty  sequence of Reeb chords on $\bdy\mathcal H_D$ for which $(B_D, \vec{\rho})$ is  compatible, and $\ind(B_D, \vec{\rho}) = n$, then
$$m(\xx) = m(a(-\vec{\rho})) + m(\yy) + n.$$
\end{proposition}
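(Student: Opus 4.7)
Plan. The strategy is to reduce both cases of the proposition to the closed Heegaard Floer statement from Section \ref{closedgr}, using the same pairing construction that defined the absolute grading in Section \ref{abssec}.

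I would first dispatch the provincial case. Glue $\HH_D$ to the reference diagram $\hz$ along their common boundary to obtain the closed Heegaard diagram $\HH = \hz \cup \HH_D$. Since $B_D$ is provincial, it is supported away from $\bdy\HH_D$, so it extends trivially to a closed domain $B \in \pi_2(\xx_D \boxtimes \mathbf{u}, \yy_D \boxtimes \mathbf{u})$ for any generator $\mathbf{u}$ of $\hz$; moreover, the closed Maslov index satisfies $\mu(B) = e(B_D) + n_{\xx_D}(B_D) + n_{\yy_D}(B_D) = \ind(B_D) = n$, since no Reeb chord terms appear in the index formula. Applying the closed statement of Section \ref{closedgr} gives $m(\xx_D \boxtimes \mathbf{u}) - m(\yy_D \boxtimes \mathbf{u}) \equiv n \pmod 2$, and the additivity of the absolute grading under pairing, built into the definition in Section \ref{abssec}, collapses this to $m(\xx_D) - m(\yy_D) \equiv n$, as required.

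For the non-provincial case, I would additionally interpose a copy of $\mathrm{AZ}(\zz)$ between $\hz$ and $\HH_D$ to form the triple-glued closed diagram $\hz \cup \mathrm{AZ}(\zz) \cup \HH_D$; this represents the same 3-manifold as $\hz \cup \HH_D$ because $\mathrm{AZ}(\zz)$ carries the identity bimodule, so the closed case of Section \ref{closedgr} applies to it. Generators of the triple-glued diagram are compatible tuples $(\mathbf{u}, \mathbf{a}, \xx_D)$, where $\mathbf{a}$ is a generator of $\mathrm{AZ}(\zz)$ whose absolute grading equals the algebra grading of the corresponding algebra element by construction in Section \ref{rel} and \cite{dec}. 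The pair $(B_D, \vec{\rho})$ glues with a domain $B_{AZ}$ on $\mathrm{AZ}(\zz)$ whose Reeb chord boundary on the $\HH_D$ side matches $\vec{\rho}$ and whose other side is absorbed into the idempotent structure at $\mathbf{a} = a(-\vec{\rho})$, and then with a provincial domain on $\hz$, to produce a closed domain $B$. The index pairing formula from \cite[Section 5.7]{bfh2} expresses $\mu(B)$ as the sum of the three bordered indices, of which the $\ind(B_D, \vec{\rho}) = n$ piece and the $\mathrm{AZ}(\zz)$ contribution (which, by the differential graded property $m(\bdy \mathbf{a}) = m(\mathbf{a})-1$ of the algebra grading, reads off as $m(a(-\vec{\rho}))$ modulo $2$) combine with the closed case to deliver $m(\xx_D) = m(a(-\vec{\rho})) + m(\yy_D) + n$.

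The main obstacle will be the combinatorial bookkeeping in this triple pairing: one must verify that the Reeb chord sequences can be realized by honest domains consistently across all three pieces, and that the index and grading contributions from $\mathrm{AZ}(\zz)$ assemble precisely into $m(a(-\vec{\rho}))$. Concretely, this amounts to carefully reconciling the $\iota$ correction term in the bordered index formula with the combinatorial sign assignment defining the algebra grading in Section \ref{rel}, and tracking the idempotent compatibilities as one passes from $\HH_D$ through $\mathrm{AZ}(\zz)$ into $\hz$.
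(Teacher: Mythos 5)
Your plan is essentially the paper's proof: the provincial case is handled by closing off the diagram and invoking the closed-case relation between index and intersection sign, and the non-provincial case by pairing with $\mathrm{AZ}(\zz)$ so that $a(-\vec{\rho})$ becomes an honest generator, then applying index additivity under $\natural$ together with the closed statement and the multiplicativity of signs under gluing. The only differences are cosmetic --- the paper works directly with $\mathrm{AZ}(\zz)\cup\HH_D$ (not closed, but the $\mathrm{AZ}$ domain is one-sided, so the gluing formula still applies) rather than your triple gluing with $\hz$, and the ``bookkeeping'' you defer is precisely the content of the paper's two lemmas: that $\ind(B_A\natural B_D)=\ind(B_A,\brho)+\ind(B_D,\vec{\rho})-1$, and that the multiplication half-strips in $\mathrm{AZ}(\zz)$ have embedded index $1$ (which follows from a direct computation of $e$, $n_{\xx}$, $n_{\yy}$, and $\iota$, not from the differential property $m(\bdy\aaa)=m(\aaa)-1$ as you suggest).
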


When $B_D$ is provincial, the index formula says that $e(B_D) + n_{\xx}(B_D) + n_{\yy}(B_D) = n$, and the result follows (e.g. close off the Heegaard diagram $H_D$ any way you like, and observe that any two generators connected by the domain of $B_D$ have Maslov gradings that differ by $n$ mod $2$). In the second case, the proof follows directly from the following two lemmas.

Let $\HH_A$ and $\HH_D$ be two bordered Heegaard diagrams that glue up to form a closed Heegaard diagram $\mathcal H$. Recall that there is a natural identification of $\pi_2(\xx_A\cup\xx_D, \yy_A\cup \yy_D)$ with the subset of $\pi_2(\xx_A, \yy_A)\times \pi_2(\xx_D, \yy_D)$ consisting of pairs $(B_A, B_D)$ with $\bdy^{\bdy}(B_A) + \bdy^{\bdy}(B_D) = 0$ (and on the level of domains this identification is given by adding the two domains) \cite[Lemma 4.32]{bfh2}. Recall that when $B_A$ and $B_D$ agree along the boundary, $B_A\natural B_D$ denotes the homology class in $\pi_2(\xx_A\cup\xx_D, \yy_A\cup \yy_D)$ identified with $B_A, B_D$. The index is additive under $\natural$ in the following sense.

\begin{lemma} \label{ind}
Let $\HH_A$ and $\HH_D$ be two bordered Heegaard diagrams that glue up to form a closed Heegaard diagram $\mathcal H$. Let $\vec{\rho}$ be a sequence of Reeb chords on $\bdy\mathcal H_D$, and let $B_A\in \pi_2(\xx_A, \yy_A)$ and $B_D\in \pi_2(\xx_D, \yy_D)$ be homology classes such that both pairs $(B_A, \brho)$ and  
 $(B_D, \vec{\rho})$ are compatible, where $\brho$ is the set of Reeb chords for $a(-\vec{\rho})$. 
Then 
$$\ind(B_A\natural B_D) = \ind(B_A) + \ind(B_D) - 1. $$
\end{lemma}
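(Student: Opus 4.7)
My plan is to prove the lemma by directly expanding both sides using the explicit index formula for bordered Heegaard Floer homology from \cite[Section 5.7]{bfh2} and reducing the statement to a combinatorial identity for the Reeb chord contribution. Recall that for a bordered class $(B, \vec{\rho})$, the index has the shape
$$\ind(B, \vec{\rho}) = e(B) + n_{\xxx}(B) + n_{\yyy}(B) + \iota(\vec{\rho}),$$
where $\iota(\vec{\rho})$ depends only on the Reeb chord sequence, while for a closed Heegaard diagram the index is simply the Maslov index $\mu(B) = e(B) + n_{\xxx}(B) + n_{\yyy}(B)$.

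First, I would verify that the Euler measure and the point measures are additive under the gluing operation $\natural$, namely
$$e(B_A \natural B_D) = e(B_A) + e(B_D) \quad \text{and} \quad n_{\mathbf{p}}(B_A \natural B_D) = n_{\mathbf{p}_A}(B_A) + n_{\mathbf{p}_D}(B_D)$$
for $\mathbf{p} = \xxx$ or $\yyy$. This follows from the local definitions of Euler measure and point measures together with the fact that the collar region near $\bdy\Sigma$ can be taken to contain no generators and to contribute nothing to the Euler measure after the standard normalization in \cite{bfh2}.

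Consequently, the difference $\ind(B_A, \vec{\rho}) + \ind(B_D, \vec{\rho}) - \ind(B_A \natural B_D)$ equals the sum $\iota_A(\vec{\rho}) + \iota_D(\vec{\rho})$ of the two Reeb-chord contributions, so the lemma reduces to the combinatorial identity $\iota_A(\vec{\rho}) + \iota_D(\vec{\rho}) = 1$. This identity says that the boundary contributions to the index on the two sides pair up to a single extra Maslov degree. It can be established from the definitions of $\iota$ in terms of a linking function on Reeb chords: because $\brho$ on the type $A$ side and $-\vec{\rho}$ on the type $D$ side consist of the same chords with opposite orientations, the linking contributions on the two sides combine in a controlled way, and a short computation produces the asserted $1$.

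The main obstacle is exactly this combinatorial identity, since verifying it requires carefully tracking the conventions for $\iota$ on the two sides as well as the sign behavior under reversal of Reeb chord sequences. A clean route is to first handle the case $|\vec{\rho}| = 1$, where the identity reduces to a single crossing computation, and then induct on the length of $\vec{\rho}$, using the multiplicativity of $a(-\vec{\rho})$ on the type $D$ side together with the additivity of $\iota$ under concatenation. Alternatively, one can extract the identity from the pairing theorem \cite[Theorem 1.3]{bfh2}, which already encodes the correct matching of indices in its proof.
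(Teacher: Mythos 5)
Your proposal is correct and follows essentially the same route as the paper: additivity of $e$, $n_{\xx}$, and $n_{\yy}$ under $\natural$, reducing the lemma to the identity that the two Reeb-chord contributions sum to $1$. The ``main obstacle'' you flag is handled in the paper by a direct closed-form computation rather than induction: \cite[Equation 5.58]{bfh2} and \cite[Lemma 5.60]{bfh2} express $\iota(\vec{\rho})$ and $\iota(a(-\vec{\rho}))$ in terms of linking numbers, and the sign reversal $L_A(-\rho_i,-\rho_j) = -L_D(\rho_i,\rho_j)$ across the boundary immediately gives $\iota(\brho) = -|\vec{\rho}| - \iota(\vec{\rho})$, which yields the asserted $1$.
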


\begin{proof}
Note that since $(B_A, \brho)$ and  $(B_D, \vec{\rho})$ are both compatible, then $B_A$ and $B_D$ agree along the boundary, and the corresponding homology class $B_A\natural B_D$ exists. We also remark  that typically the set $\brho$ for which $a(\brho) = a(-\vec{\rho})$ is not the same as the set of Reeb chords in the sequence $-\vec{\rho}$. 
In general, given a compatible pair $(B, \vec{\brho})$, where $\vec{\brho}$ is a sequence of non-empty sets of Reeb chords,  the \emph{embedded index} of the pair is defined as 
$$\ind (B, \vec{\brho}) = e(B) + n_{\xx}(B) + n_{\yy}(B)+ |\vec{\brho}|+ \iota(\vec{\brho}),$$
see \cite[Definition 5.61]{bfh2}.
In our case, $\vec{\rho}$ is a sequence of one-element sets, whereas $\brho$ is a sequence of length one, i.e. of only one set, so $|\brho|=1$. The  indices in these two cases are given by the formulas
\begin{align*}
\ind (B_A, \brho) &= e(B_A) + n_{\xx_A}(B_A) + n_{\yy_A}(B_A)+ 1+ \iota(\brho)\\
\ind(B_D, \vec{\rho}) &= e(B_D) + n_{\xx_D}(B_D) + n_{\yy_D}(B_D) + |\vec{\rho}|+\iota(\vec{\rho}).
\end{align*}
By \cite[Equation 5.58]{bfh2},
$$\iota(\vec{\rho}) = \sum_i \iota(\rho_i) +  \sum_{i<j}L(\rho_i, \rho_j) =  -\frac{|\vec{\rho}|}{2}+  \sum_{i<j}L_D(\rho_i, \rho_j), $$ 
where $L_D(\rho_i, \rho_j)$ is the linking number $L(\rho_i, \rho_j)$  on $\bdy \HH_D$. By \cite[Lemma 5.60]{bfh2},
$$\iota(\brho) = \iota(a(-\vec{\rho})) = -\frac{|\vec{\rho}|}{2} + \sum_{i<j}L_A(-\rho_i, -\rho_j),$$
where $L_A(-\rho_i, -\rho_j)$ is the linking number $L(-\rho_i, -\rho_j)$ but considered on $\bdy \HH_A = -\bdy \HH_D$. Since $L_A(-\rho_i, -\rho_j) = - L_D(-\rho_i, -\rho_j) =  - L_D(\rho_i, \rho_j)$, we can write
$$\iota(\brho) =  -\frac{|\vec{\rho}|}{2} - \sum_{i<j}L_D(\rho_i, \rho_j) = -|\vec{\rho}| - \iota(\vec{\rho}).$$
Note that $e$, $n_x$, and $n_y$ are additive in this setup, and so we have
\begin{align*}
\ind (B_A\natural B_D) &= e(B_A\natural B_D) + n_{\xx_A\cup \xx_D}(B_A\natural B_D)+ n_{\yy_A\cup \yy_D}(B_A\natural B_D)\\
&= e(B_A) + n_{\xx_A}(B_A) + n_{\yy_A}(B_A) + e(B_D) + n_{\xx_D}(B_D) + n_{\yy_D}(B_D)\\
&= \ind(B_A, \brho) - 1- \iota(\brho) + \ind(B_D, \vec{\rho})  -  |\vec{\rho}|-\iota(\vec{\rho})\\
&=  \ind(B_A, \brho) + \ind(B_D, \vec{\rho}) - 1.\qedhere
\end{align*}
\end{proof}
To complete the proof of Proposition \ref{indgr}, we use the Heegaard diagram $AZ(\zz)$ and a simple gluing argument, along with the following observation.
\begin{lemma} \label{indaz}
Given a set of Reeb chords $\brho$,  if  $B$ is the homology class for the diagram $AZ(\zz)$ that represents the multiplication of $I(\sss)$ by $a(\brho)$, then  $\ind(B) = 1$.
\end{lemma}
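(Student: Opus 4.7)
The plan is to apply the embedded index formula from \cite[Section 5]{bfh2}, namely
$$\ind(B,\brho) = e(B) + n_{\xx}(B) + n_{\yy}(B) + |\brho| + \iota(\brho),$$
and verify that each term either vanishes or contributes in such a way that the sum collapses to $1$. Since here $\brho$ is a single set of Reeb chords, $|\brho| = 1$ as a sequence of sets, and by \cite[Lemma 5.60]{bfh2} we have $\iota(\brho) = \iota(a(\brho))$, which by the discussion preceding \cite[Lemma 5.60]{bfh2} equals $\inv(a(\brho)) - |\brho|/2$ (where $|\brho|$ on the right is now the number of Reeb chords).

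First I would describe the domain $B$ explicitly. In $\mathrm{AZ}(\zz)$, the generator $I(\sss)$ lives at the diagonal intersection points $\alpha_i \cap \beta_i$ for $i \in \sss$, while the generator $a(\brho)$ (when the product $I(\sss)\cdot a(\brho)$ is nonzero) lives at the intersection points prescribed by the strand diagram for $\brho$, with horizontal strands consistently added on the remaining arcs in $\sss$. The domain $B$ then decomposes naturally as a union of rectangular bordered regions, one associated with each Reeb chord $\rho \in \brho$, whose $\partial_\alpha \mathrm{AZ}(\zz)$-boundary is asymptotic to $\rho$. Horizontal strands contribute no domain; each moving strand contributes a region of Euler measure $0$, but the corners interact with the nearby crossings.

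Next I would compute the remaining terms from this decomposition. A direct inspection of the local picture shows that $e(B) = 0$ in the interior of $\mathrm{AZ}(\zz)$, while the vertex multiplicities $n_{\xx}(B) + n_{\yy}(B)$ exactly count half the number of crossings in the strand diagram for $a(\brho)$, i.e.\ $\inv(a(\brho))$. Combined with $\iota(a(\brho)) = \inv(a(\brho)) - |\brho|/2$ and a careful accounting of the half-strip contributions at $\partial \mathrm{AZ}(\zz)$ (which contribute the missing $|\brho|/2$ back into $e(B)$), the sum becomes
$$\ind(B,\brho) = 0 + \inv(a(\brho)) + 1 + \bigl(\inv(a(\brho)) \text{ cancellations}\bigr) = 1.$$

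The main obstacle is a careful bookkeeping of the contributions of crossings in the strand diagram to $n_{\xx}, n_{\yy}$ on the one hand and to $\inv(a(\brho))$ inside $\iota(\brho)$ on the other, together with the half-strip Euler-measure contributions; these must cancel precisely to leave only the $|\brho|=1$ term. An alternative, possibly cleaner route would be to invoke the construction of $\mathrm{AZ}(\zz)$ as a Heegaard diagram for the identity type $DD$ bimodule of $\az$ from \cite{hfmor, auroux}, where by design the domains connecting $I(\sss)$ to $a(\brho)$ with asymptotics $\brho$ are exactly those contributing to the $DD$ structure maps, and are thus forced to satisfy $\ind = 1$ so that the associated moduli spaces are rigid.
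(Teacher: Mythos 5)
Your overall strategy coincides with the paper's: write $B$ as a union of one half-strip per chord of $\brho$ and evaluate each term of the embedded index formula $\ind(B,\brho)=e(B)+n_{\xx}(B)+n_{\yy}(B)+1+\iota(\brho)$. However, the bookkeeping you sketch---which you yourself flag as the entire content of the argument---does not close up. The actual mechanism is that $e(B)=0$ and $n_{\xx}(B)+n_{\yy}(B)=-\iota(\brho)$ \emph{exactly}, so that everything except the $|\vec{\brho}|=1$ term cancels. Concretely, each half-strip contributes $1/4$ at its own initial corner and $1/4$ at its own terminal corner, giving $|\brho|/2$ in total, and the terminal points pick up an additional $1/2$ for each abutting pair ($\rho_j^+=\rho_i^-$) and $1$ for each interleaved pair ($\rho_j^-\lessdot\rho_i^-\lessdot\rho_j^+\lessdot\rho_i^+$); the sum is $|\brho|/2+\sum_{i<j}|L(\rho_i,\rho_j)|=-\iota(\brho)$. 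Your claimed identity $n_{\xx}(B)+n_{\yy}(B)=\inv(a(\brho))$ is false on two counts: it omits the $|\brho|/2$ coming from each strip's own corners, and the abutting-pair contributions of $1/2$ are not crossings of any strand diagram (moreover $a(\brho)$ is a sum of diagrams, so $\inv$ of it is not well defined). Your treatment of $e(B)$ is also internally inconsistent---you first assert $e(B)=0$ and then reinsert ``the missing $|\brho|/2$'' into $e(B)$---and the final display is not an arithmetic identity: with your stated values $\iota(\brho)=\inv-|\brho|/2$ and $n_{\xx}+n_{\yy}=\inv$ one gets $1+2\inv$, not $1$.

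Your proposed alternative via the interpretation of $\mathrm{AZ}(\zz)$ as the identity $DD$ bimodule is legitimate in substance (the paper itself notes the lemma is implied by \cite[Proposition 4.1]{hfmor}), but the logic as you phrase it is reversed: domains are not ``forced to satisfy $\ind=1$ so that the moduli spaces are rigid''---rather, the structure maps by definition count only index-one classes, so to conclude $\ind(B)=1$ for \emph{these particular} domains one must know independently that they are the ones contributing to the identity bimodule, which is precisely what the cited proposition establishes. As a self-contained proof, the direct computation still has to be done correctly.
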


\begin{proof} 
Denote $\xx := I(\sss)$ and $\yy := I(\sss)a(\brho)$. The domain of $B$ consists of $|\brho|$ half strips on $AZ(\zz)$. For each half strip $B_i$, let $x_i$ be the bottom left corner, and $y_i$ be the top right corner. Observe that
\begin{displaymath}
n_{x_i}(B_j) = \left\{ \begin{array}{ll}
\frac{1}{4} & \textrm{if $i=j$}\\
0 & \textrm{otherwise,}
\end{array} \right.
\end{displaymath}
so 
$$n_{\xx}(B) = \sum_{i, j}n_{x_i}(B_j)  = \frac{|\brho|}{4},$$
and
\begin{displaymath}
n_{y_i}(B_j) = \left\{ \begin{array}{ll}
\frac{1}{4} & \textrm{if $i=j$}\\
\frac{1}{2} & \textrm{if $\rho_j^+ = \rho_i^-$}\\
1 & \textrm{if  $\rho_j^-  \lessdot \rho_i^- \lessdot \rho_j^+  \lessdot \rho_i^+$}\\
0 & \textrm{otherwise,}
\end{array} \right.
\end{displaymath}
so
\begin{align*}
n_{\yy}(B) &= \sum_{i, j}n_{y_i}(B_j) \\
&= \frac{|\brho|}{4}+ \frac{|\{ \{\rho_i, \rho_j\}\subset \brho| \rho_j^+ = \rho_i^- \}|}{2} + |\{ \{\rho_i, \rho_j\}\subset \brho| \rho_j^-  \lessdot \rho_i^- \lessdot \rho_j^+  \lessdot \rho_i^+\}|\\
&= \frac{|\brho|}{4} + \sum_{\{\rho_i, \rho_j\}\subset \brho}|L(\rho_i, \rho_j)|\\
&= -\iota(\brho) - \frac{|\brho|}{4}.
\end{align*}
Finally, 
\begin{align*}
\ind(B) &= e(B) + n_{\xx}(B) + n_{\yy}(B) + 1 + \iota(\brho)\\
&= 0 + \frac{|\brho|}{4}  -\iota(\brho) - \frac{|\brho|}{4}+ 1+ \iota(\brho) \\
&=  1.\qedhere
\end{align*}
\end{proof}
We note that the assertion of Lemma \ref{indaz} is implied by the statement of \cite[Proposition 4.1]{hfmor}, but we chose to include the proof for completeness.

Now, suppose $B_D\in\pi_2 (\xx, \yy)$, $(B_D, \vec{\rho})$ is compatible, and $\ind(B_D, \vec{\rho}) = n$. Let $B_A$ be the domain in $AZ(\zz)$ from $I_A(\xx)$ to $I_A(\xx) a(-\vec{\rho}) = I_A(\xx) a(\brho)$. Technically, $AZ(\zz)\cup \HH_D$ is not a closed Heegaard diagram, but since $B_A$ is a one-sided domain, the computation in Lemma \ref{ind} still applies, so 
$$\ind(B_A\natural B_D) =  \ind(B_A, \brho) + \ind(B_D, \vec{\rho}) - 1 = n.$$
The orderings and orientations on the curves in $AZ(\zz)$  and $\HH_D$ have already been fixed, and can be concatenated, both for the $\alpha$ curves and for the $\beta$-curves, to produce an ordering and orientation on the curves in $AZ(\zz)\cup \HH_D$. Analogous to Proposition \ref{grpairing}, given generators $\xxx\in AZ(\zz)$ and $\yyy\in \HH_D$ such that $\zzz:=\xxx\boxtimes\yyy$ is a non-zero generator in   $AZ(\zz)\cup \HH_D$, one can compute the quantity $s(\zzz):=\mathrm{sign}(\sigma_{\zzz})\prod_{i=1}^{g+k}z_i$ by 
$$\textrm{sign}(\sigma_{\xxx})\prod_{i=1}^k s(x_i) \textrm{sign}(\sigma_{o(\yyy)})\textrm{sign}(\sigma_{\yyy})\prod_{i=1}^{g} s(y_i) = s(\xxx)s(\yyy),$$
where $s(\xxx)$ is the sign function for $AZ(\zz)$ at $\xxx$, and $s(\yyy)$ is the type $D$ sign function for $\HH_D$ at $\yyy$.

Since $I_A(\xx)\boxtimes \xx$ and $I_A(\xx)a(\brho)\boxtimes\yy$ are connected by a closed domain of index $n$ in $AZ(\zz)\cup \HH_D$, then $s(I_A(\xx)\boxtimes \xx)s(I_A(\xx)a(\brho)\boxtimes\yy) = (-1)^n$ by classical Heegaard Floer homology. Then $s(I_A(\xx))s(\xx)s(I_A(\xx)a(\brho))s(\yyy) = (-1)^n$. But $s(I_A(\xx))=1$, so 
$$s(\xx) = (-1)^n s(I_A(\xx)a(\brho))s(\yyy).$$
In $\Z/2$ grading notation,
\begin{align*}
m(\xx) &= m(a(\brho)) + m(\yy) + n\\
&= m(a(-\vec{\rho})) + m(\yy) + n.
\end{align*}
This completes the proof of  Proposition \ref{indgr}. 

We now return to the invariance proof.

 \begin{proof}[Proof of Theorem \ref{invariance}]
 Most of the proof follows from that of the ungraded version,  \cite[Theorem 6.16]{bfh2}. We only need to verify  that the maps induced by a change of almost complex structure, isotopy, stabilization, and handleslide preserve gradings.
 
 The argument for a change of almost complex structure or isotopy carries verbatim from \cite{bfh2}, along with the observation that the ``continuation map" defined in \cite[Section 6.3.1]{bfh2} counts domains of index $0$, so by Proposition \ref{indgr} it is a graded homotopy equivalence. 
Invariance under isotopies can also be verified by defining continuation maps that count domains of index $0$.

As observed in \cite[Section 6.3.3]{bfh2},  stabilizing the region of $\HH$ containing the basepoint  produces a new Heegaard diagram $\HH'$, and an isomorphic chain complex $\cfdhat(\HH')$. 
 Note that labeling the new $\alpha$-circle and $\beta$-circle with the highest index, and orienting them so that the new intersection point is positive is a choice compatible with the desired grading on the closed manifold $H_{\zz}\cup Y(\HH')$, and for this same choice, the  grading on $\cfdhat(\HH')$ coincides with the grading on $\cfdhat(\HH)$ under the isomorphism.

The cases of the different kinds of handleslides are similar to each other, and   we verify here the hardest one -- handlesliding an $\alpha$-arc over an $\alpha$-circle. Again, most of the argument carries verbatim from \cite{bfh2}, and we only need to check that the triangle map defined in \cite[Equation 6.33]{bfh2} preserves the $\Z/2$ grading.

Let $\HH_D = (\Sigma, \balpha, \bbeta, z)$ be an admissible bordered Heegaard diagram. Order and orient all curves so that we get the absolute grading defined in Section \ref{abssec} by looking at $\HH :=\HH_{\zz}\cup \HH_D$, and write $\balpha = \{\alpha_1, \ldots, \alpha_{2k}, \alpha_1^c, \ldots, \alpha_{g-k}^c\}$. Let $\alpha_i^H$ be the result of performing a handleslide of $\alpha_i$ over $\alpha_j^c$, and let $\HH_D' = (\Sigma, \balpha^H, \bbeta, z)$ be the diagram resulting from that handleslide. The ordering and orientation on $\balpha$ induces an ordering and orientation on $\balpha^H$: each $\alpha_t^{c,H}$ is a small perturbation of the \emph{oriented} circle $\alpha_t^c$, for $t\neq i$ each $\alpha_t^{H}$ is an isotopic translate of the \emph{oriented} arc $\alpha_t$, and $\alpha_i^H$ is also obtained as in \cite[Section 6.3.2]{bfh2} and oriented so that each of the short Reeb chords in $\bdy \bar\Sigma$ running from $\alpha_i^H$ to $\alpha_i$ connects two points that inherit the same orientation (i.e. $\alpha_i^H$ is oriented according to our usual ``bottom to top" convention). We preserve the ordering and orientation on $\bbeta$.

We claim that the ordering and orientation on $\balpha^H$ and $\bbeta$ is compatible with the absolute grading obtained by closing off the Heegaard diagram to $\HH' := \HH_{\zz}\cup \HH_D'$. This is true since the handleslide in $\Sigma$ corresponds to a handleslide in the closed Heegaard surface  obtained by gluing in $\HH_{\zz}$.
If we think of the absolute $\Z/2$ grading on the closed diagram as corresponding to the canonical homology orientation for the closed $3$-manifold $H_{\zz}\cup Y(\HH)$, as  in Section \ref{closedgr},  then observe that the oriented handleslide
corresponds to a change of basis for $C_1(H_{\zz}\cup Y(\HH))\oplus C_2(H_{\zz}\cup Y(\HH))$ that replaces $[\widetilde\alpha_i]$ with $[\widetilde\alpha_i \pm \alpha_j^c]$, where $\widetilde\alpha_i$ is the circle in $\HH$ containing the arc $\alpha_i$. This change of basis preserves orientation, so we have the same homology orientation on the closed manifold as before the handleslide. Alternatively, if we think of the $\Z/2$ grading on $\hfhat$ for the closed diagram as the one coming from $\underline{\hf}^{\infty}(H_{\zz}\cup Y(\HH))$, then observe that the oriented handleslide has a corresponding triangle map on  $\underline{\hf}^{\infty}$ that preserves the grading induced by intersection signs,  since it counts index $0$ domains, and nearest generators have identical permutations and local intersection signs. This fact is spelled out properly in the next paragraph.

Recall the definition of the triangle map (\cite[Equation 6.33]{bfh2}):
\begin{align*}
F_{\balpha, \balpha^H, \bbeta}(\xx) &:= \sum_{\yy}\sum_{B\in\pi_2(\xx', \yy)}\sum_{\{\vec{\rho}|\ind(B,\vec{\rho}) = 0\}}\#(\mathcal M^B(\xx, \yy, \boldsymbol\theta; \vec{\rho})) a(-\vec{\rho})\yy \\
F_{\balpha, \balpha^H, \bbeta}(a\xx) &:= aF_{\balpha, \balpha^H, \bbeta}(\xx).
\end{align*}
In simple words, given $\xx\in\cfdhat(\Sigma, \balpha, \bbeta, z)$, we count holomorphic curves of index $0$ on $(\Sigma, \balpha, \bbeta, z)$ starting at the nearest generator $\xx'$ to $\xx$.
 For any generator $\xx$, the nearby generator $\xx'$ carries the same permutation and local intersection signs, and hence has the same grading. By Proposition \ref{indgr}, $m(\yy) = m(\xx') = m(\xx)$. In other words, the triangle map preserves the grading. 
 \end{proof}

\bibliographystyle{/Users/ina/Documents/work/hamsplain2}

\bibliography{/Users/ina/Documents/work/master}

\providecommand{\bysame}{\leavevmode\hbox to3em{\hrulefill}\thinspace}
\providecommand{\href}[2]{#2}
\begin{thebibliography}{10}

\bibitem{auroux}
Denis Auroux, \emph{Fukaya categories of symmetric products and bordered
  {H}eegaard-{F}loer homology}, J. G\"{o}kova Geom. Topol. GGT \textbf{4}
  (2010), 1--54, \mbox{\href{http://arxiv.org/abs/1001.4323v3}{{\tt
  arXiv:1001.4323v3}}}.

\bibitem{dsfh}
Stefan Friedl, Andr\'{a}s Juh\'{a}sz, and Jacob Rasmussen, \emph{The
  decategorification of sutured {F}loer homology}, J. Topol. \textbf{4} (2011),
  no.~2, 431--478, \mbox{\href{http://arxiv.org/abs/0903.5287v4}{{\tt
  arXiv:0903.5287v4}}}.

\bibitem{jh-splicing}
Jonathan Hanselman, \emph{Splicing integer framed knot complements and bordered
  {H}eegaard {F}loer homology}, Quantum Topol. \textbf{8} (2017), no.~4,
  715--748, \mbox{\href{http://arxiv.org/abs/1409.1912}{{\tt
  arXiv:1409.1912}}}.

\bibitem{gh}
Yang Huang and Vinicius G.~B. Ramos, \emph{A topological grading on bordered
  {H}eegaard {F}loer homology}, Quantum Topol. \textbf{6} (2015), no.~3,
  403--449, \mbox{\href{http://arxiv.org/abs/1211.7367v1}{{\tt
  arXiv:1211.7367v1}}}.

\bibitem{hfmor}
Robert Lipshitz, Peter~S. Ozsv{\'a}th, and Dylan~P. Thurston, \emph{{H}eegaard
  {F}loer homology as morphism spaces},
  \mbox{\href{http://arxiv.org/abs/1005.1248v1}{{\tt arXiv:1005.1248v1}}}.

\bibitem{bimod}
\bysame, \emph{Bimodules in bordered {H}eegaard {F}loer homology}, Geom. Topol.
  \textbf{19} (2015), no.~2, 525--724,
  \mbox{\href{http://arxiv.org/abs/1003.0598v3}{{\tt arXiv:1003.0598v3}}}.

\bibitem{bfh2}
\bysame, \emph{Bordered {H}eegaard {F}loer homology}, Mem. Amer. Math. Soc.
  \textbf{254} (2018), no.~1216, viii+279,
  \mbox{\href{http://arxiv.org/abs/0810.0687v5}{{\tt arXiv:0810.0687v5}}}.

\bibitem{osz6}
P.~Ozsv{\'a}th and Z.~Szab{\'o}, \emph{Absolutely graded {F}loer homologies and
  intersection forms for four-manifolds with boundary}, Adv. Math. \textbf{173}
  (2003), 179--261, \mbox{\href{http://arxiv.org/abs/math.SG/0110170}{{\tt
  arXiv:math.SG/0110170}}}.

\bibitem{osz14}
\bysame, \emph{Holomorphic disks and three-manifold invariants: properties and
  applications}, Annals of Mathematics \textbf{159} (2004), no.~3, 1159--1245,
  \mbox{\href{http://arxiv.org/abs/math.SG/0105202}{{\tt
  arXiv:math.SG/0105202}}}.

\bibitem{dec}
Ina Petkova, \emph{The decategorification of bordered {H}eegaard {F}loer
  homology}, J. Symplectic Geom. \textbf{16} (2018), no.~1, 227--277,
  \mbox{\href{http://arxiv.org/abs/1212.4529}{{\tt arXiv:1212.4529}}}.

\bibitem{bs}
Rumen Zarev, \emph{Bordered {F}loer homology for sutured manifolds},  (2009),
  \mbox{\href{http://arxiv.org/abs/0908.1106v2}{{\tt arXiv:0908.1106v2}}}.

\end{thebibliography}

\end{document}